\documentclass[a4paper,11pt,reqno]{amsart}

\usepackage[T1]{fontenc}
\usepackage{lmodern}
\usepackage{amsmath,amssymb,amsthm,mathtools}
\usepackage[top=3cm,bottom=3cm,left=3.15cm,right=3.15cm]{geometry}
\usepackage[colorlinks=true,linkcolor=blue,citecolor=blue,urlcolor=blue]{hyperref}
\usepackage[capitalise,nameinlink,noabbrev]{cleveref}
\usepackage{microtype}
\usepackage{booktabs,array}
\usepackage{tikz}
\usepackage{tikz-cd}
\usepackage{needspace}

\AddToHook{env/proof/end}{\nopagebreak[4]}

\newtheorem{theorem}{Theorem}[section]
\newtheorem{proposition}[theorem]{Proposition}
\newtheorem{lemma}[theorem]{Lemma}
\newtheorem{corollary}[theorem]{Corollary}
\theoremstyle{definition}

\newtheoremstyle{spacedremark}
  {9pt plus 2pt minus 1pt}{9pt plus 2pt minus 1pt}
  {\normalfont}{}{\bfseries}{.}{0.5em}{}
\theoremstyle{spacedremark}
\newtheorem{remark}[theorem]{Remark}
\AddToHook{env/remark/begin}{%
  \pushQED{\qed}%
}
\AddToHook{env/remark/end}{\popQED}

\newcommand{\C}{\mathbb C}
\newcommand{\R}{\mathbb R}
\newcommand{\Z}{\mathbb Z}
\newcommand{\T}{\mathbb T}
\newcommand{\Pone}{\mathbb P^1}

\newcommand{\Id}{\operatorname{Id}}

\title[The genus-one case of Calabi's problem]
{A Complex Structure on \(\T^2\times S^4\):\\
The Genus-One Case of Calabi's Problem}

\author[M. Gao]{Min Gao}
\address{Chern Institute of Mathematics,
Nankai University, Tianjin 300071, P. R. China}
\email{mingao@mail.nankai.edu.cn}

\author[Z. Z. Tang]{Zizhou Tang}
\address{Chern Institute of Mathematics and LPMC,
Nankai University, Tianjin 300071, P. R. China}
\email{zztang@nankai.edu.cn}

\author[W. J. Yan]{Wenjiao Yan}
\address{School of Mathematical Sciences, Laboratory of Mathematics and Complex Systems,
Beijing Normal University, Beijing 100875, P. R. China}
\email{wjyan@bnu.edu.cn}

\date{}

\subjclass[2020]{32J17, 32G20, 53C15, 57R55}
\keywords{Calabi's problem, complex structures, torus degeneration,
smooth topology}
\thanks{This work was partially supported by the NSFC
(Nos.~12271038, 12371048 and 12526205) and Nankai
Zhide Foundation.}

\hypersetup{
  pdftitle={A Complex Structure on T2 x S4: The Genus-One Case of Calabi's Problem},
  pdfauthor={Min Gao, Zizhou Tang, and Wenjiao Yan},
  pdfsubject={A complex structure on the smooth manifold T2 x S4},
  pdfkeywords={Calabi's problem, complex structures, torus degeneration, smooth topology}
}

\begin{document}

\begin{abstract}
We construct a complex structure on \(\T^2\times S^4\), giving an affirmative answer to
the genus-one case of Calabi's problem.  The resulting compact complex
threefold is non-K\"ahler and admits no biholomorphic product
decomposition.
\end{abstract}

\maketitle

\section{Introduction}

The sphere \(S^4\) admits no complex structure; in fact,
it does not even admit an almost complex structure. Calabi \cite{Calabi1958} asked whether the product
\(V^2\times S^4\), where \(V^2\) is a closed orientable surface,
admits a complex structure. In the genus-zero case, a construction
on \(S^2\times S^4\) has recently been proposed in
\cite{LiangLiuZhang2026}, conditional on the validity of the proposed
\(S^6\) construction in \cite{SixSphere2026}.
In this paper, we construct a complex structure on
\(\T^2\times S^4\), giving an affirmative answer to Calabi's
problem in genus one. Our construction relies neither on a complex structure on
\(S^6\) nor on results from \cite{SixSphere2026}. 
Throughout this paper, \(\T^r=\R^r/\Z^r\).

More generally, little is known about the existence of complex
structures on \(S^4\times M\), where \(M\) is a closed smooth manifold.
The existence of a complex structure depends, in general,
on the underlying smooth structure. Whether \(S^4\) has a unique smooth structure up to diffeomorphism
remains unknown \cite{HassKirby2025}. Ge-Tang
\cite{GeTang2013} showed that a smooth homotopy four-sphere
\(\Sigma^4\) admitting a properly isoparametric function for some
Riemannian metric is diffeomorphic to \(S^4\).
The constructions of Meersseman
\cite
{Meersseman2000} and Bosio-Meersseman \cite
{BosioMeersseman2006}
yield complex structures on manifolds such as
\(\bigl(\#^{5}(S^3\times S^4)\bigr)\times S^1\), where
\(\#^5\) denotes the connected sum of five copies.
In this example, \(S^4\) is a factor of the connected-sum
summands, not of the whole manifold. A related question arises
from the isoparametric approach of Tang-Yan
\cite{TangYan2022} and Qian-Tang-Yan \cite{QianTangYan2026}. In the definite OT--FKM case with first
multiplicity \(m=4\), a complex structure on the focal submanifold \(M_+\)
was constructed in \cite{QianTangYan2026}. 
The associated isoparametric hypersurface is diffeomorphic to
\(S^4\times M_+\), but the existence of a complex structure
on this product was left open.
This paper gives the first
construction of a complex structure on \(\T^2\times S^4\), where $S^4$ is a product factor.

We construct the regular part of the fibration as a holomorphic
family of complex two-tori over the punctured complex plane \(\C^*\), with periods specified
explicitly. Each fibre is diffeomorphic to \(\T^4\).
Holomorphic torus families also occur in earlier constructions
on smooth products of complex curves with \(\T^4\).
Blanchard constructed non-K\"ahler
complex structures on \(S^2\times\T^4\), with a holomorphic
projection to the complex projective line \(\Pone\) and complex two-torus fibres
\cite
{Blanchard1956}. Calabi obtained examples
on \(M\times\T^4\), where \(M\) is a hyperelliptic curve of odd
genus at least three \cite
{Calabi1958}. Sommese treated
the construction over \(\Pone\) in the broader setting of
quaternionic manifolds
\cite
{Sommese1975}.
For a general formulation over compact complex curves, see
\cite
{Catanese2004}. In these earlier torus examples, the
underlying smooth fibration is a product, but the complex
structures of the fibres vary. This variation is described by
periods: lattice generators in \(\C^2\) that depend locally
holomorphically on the base parameter.

Identify the projective line
\(\Pone\) with \(\C\cup\{\infty\}\), where \(0\) is the
origin of \(\C\) and \(\infty\) is the point at infinity.
Our torus family is defined over
\(\Pone\setminus\{0,\infty\}=\C^*\) and degenerates at the
two omitted points. We complete the family across these points
with a smooth complex total space and identify the resulting
compact manifold with \(\T^2\times S^4\).
The period matrices, analytic fillings, and holomorphic gluing
maps are all given explicitly.

\Needspace{12\baselineskip}
The main result of this paper is the following.

\begin{theorem}\label{thm:main}
\(\T^2\times S^4\) admits a complex structure. More precisely, there exist a compact complex threefold \(X\), a diffeomorphism
\[
 X\cong_{\mathrm{diff}}\T^2\times S^4,
\]
and a surjective holomorphic map \(g:X\to\Pone\) such that:
\begin{enumerate}
\item every regular fibre is a complex two-torus;
\item the critical locus is the disjoint union of two complex tori
of complex dimension \(1\), lying over \(0\) and \(\infty\),
respectively.
\end{enumerate}
\end{theorem}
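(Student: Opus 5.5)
We build \(X\) from three pieces: a holomorphic family of complex two-tori over \(\C^*\), and two local analytic fillings over discs around \(0\) and \(\infty\). These are glued by explicit biholomorphisms over punctured discs.

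\textbf{Topological model.} The guiding model is \(S^4\) written as a "degenerating \(\T^2\)-family". Decompose \(S^4=\partial(D^2\times D^3)\) into \(S^1\times D^3\) and \(D^2\times S^2\). Better for our purposes: \(S^4\) is the join-like union of two copies of \(D^2\times \T^2\)-type pieces modulo collapse. Concretely, take the standard \(\T^2\)-action on \(S^4\subset\C^2\times\R\), \((z_1,z_2,t)\). Its orbit space is a 2-disc-like region. Composing with a suitable map, this gives a singular fibration \(S^4\to S^2\) whose generic fibre is \(\T^2\) (a circle bundle over \(S^2\), or rather a \(\T^2\)-orbit). Over the two poles a circle collapses, so the fibre is \(S^1\).

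Multiplying by the extra \(\T^2\), we get \(\T^2\times S^4\to S^2\). The generic fibre is \(\T^4\). Over \(0\) and \(\infty\) a single circle direction of the fibre collapses, and the critical locus is a \(\T^2\) there (after accounting for which directions degenerate). The task is to realize this holomorphically.

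\textbf{Construction.}

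\begin{enumerate}
\item Over \(\C^*\) with coordinate \(w\), choose a lattice \(\Lambda_w\subset\C^2\) spanned by four explicit period vectors depending holomorphically on \(w\). The monodromy around \(0\) should be a unipotent (or finite-order times unipotent) matrix in \(SL_4(\Z)\) compatible with the local model, and similarly at \(\infty\). Set \(X^*=(\C^*\times\C^2)/\Lambda\). This is a complex manifold fibred in tori.
\item Near \(w=0\), replace the torus fibration by a local model in which a \(\C^*\)-direction degenerates. The model is \(\{(x,y)\in\C^2: xy=w\}\) times an elliptic curve \(E\), quotiented by an appropriate \(\Z\)-action; this is a Kodaira-type \(I_n\)-degeneration times \(E\). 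More precisely, the filling uses the chart \((x,y,u)\) with \(w=xy\) and \(u\in E\), where the surface \(\{xy=w\}/\Z\) forms an \(I_1\)-family whose singular fibre is nodal. We arrange that the critical locus is \(\{x=y=0\}\times E\cong\) a complex 1-torus, possibly twisted by a further translation.
\item Match the periods of step 1 with the local model over a punctured disc, by an explicit biholomorphism of total spaces commuting with the projections to the base. Do the same at \(\infty\) (with \(w\mapsto 1/w\)).
\end{enumerate}

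Holomorphy of \(g\) and properties (1) and (2) of \cref{thm:main} then follow from the local forms. Compactness follows since the base \(\Pone\) is compact and \(g\) is proper.

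\textbf{Identifying the diffeomorphism type (main obstacle).} Smoothly, each filling is a neighbourhood of a critical \(\T^2\). Its boundary is a \(\T^4\)-bundle over \(S^1\), i.e. over the boundary circle of the disc, with the monodromy from step 1. The total space is obtained by gluing two such \(D^2\times\)(plumbing) pieces along a \(\T^4\)-bundle over \(S^1\times[0,1]\), and the resulting space depends on the monodromies and on the gluing.

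I would choose the periods so that the two vanishing cycles at \(0\) and \(\infty\) are \emph{different} primitive classes \(a,b\) in the fibre \(H_1(\T^4)\). Then, after splitting off the complementary \(\T^2\) (spanned by periods that are constant, up to isotopy), the picture reduces to \(\T^2\times\)(a 4-manifold with a \(\T^2\)-fibration over \(S^2\) with collapse along \(a\) at one pole and \(b\) at the other, where \(\{a,b\}\) is a basis). This is exactly the toric-type description of \(S^4\) (the \(\T^2\)-action above), once one checks that the smooth monodromy is isotopic to the identity on the product part. The delicate points are:
\begin{itemize}
\item the complex monodromy of a holomorphic family near a nodal degeneration is a Dehn twist, not the identity, so one must verify that it is smoothly trivializable in the \(\T^4\) family;
\item the Picard–Lefschetz twists at the two ends must cancel, which they do if the total monodromy around \(\C^*\) is consistent.
\end{itemize}
Equivalently, I expect to show that the circle collapsing is really a "round" \(S^1\times D^3\)-type handle rather than a Lefschetz node. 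The complex local model should therefore be chosen as \(\C\times\C^*\)-bundle collapse (fibre \(\C^*\times E\) degenerating by \((x,y)\mapsto\) one coordinate) so that the singular fibre is smooth but multiple or non-reduced along a circle-type locus. Verifying this model, the explicit gluing, and the resulting diffeomorphism \(X\cong\T^2\times S^4\) is where I expect the real work.
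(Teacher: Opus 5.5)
Your proposal leaves the decisive step, the existence of the period matrix, unspecified. The reading suggested by your Step~2 and by ``periods that are constant, up to isotopy'' cannot work. Suppose the family were $E$ times a holomorphic family of elliptic curves over $\C^*$ with nodal fibres at the ends. Its period map would lift to a holomorphic map $\C\to\mathbb H$, which is constant by Liouville, so the unipotent monodromy $T$ is impossible. Concretely, the Tate period $t+i$ has $\operatorname{Im}(t+i)=1-\tfrac{\log|q|}{2\pi}$, which vanishes at $|q|=e^{2\pi}$; the product family $E_i\times(\text{Tate family})$ therefore does not reach $\infty$. A non-split family of two-tori, whose complex structure mixes the two real two-tori, is forced. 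The paper's key input is the twistor-line period matrix $P(t)=P_{\mathrm{tw}}(q)\bigl(\begin{smallmatrix}I_2&tB\\0&I_2\end{smallmatrix}\bigr)$. For it, $\det L_{P(t)}=(1+r^2)^2-\tfrac{\log r}{2\pi}(1-r^4)>0$ on all of $\C^*$, because $1-r^4$ changes sign exactly where $\log r$ does. These periods are only asymptotically of product type, so the completion at each end needs real work beyond a local model. The paper normalizes by $A(q)$, builds the infinite plumbing chain $x_ky_k=q$, proves with uniform estimates that the extended $\Z^2$-action is free and proper, and proves that the projection is proper.

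Your topological plan is not viable either. Vanishing classes $a,b$ forming a basis cannot occur. The monodromy around $|q|=1$ must equal the transvection $x\mapsto x+\lambda(x)a$ coming from $0$ and also the inverse of $x\mapsto x+\lambda'(x)b$ coming from $\infty$. This forces $\lambda=\lambda'=0$, contradicting the nontriviality of nodal monodromy. That monodromy is a topological invariant of the $\T^4$-bundle over the circle, so it cannot be ``smoothly trivialized''. Moreover, the $\T^2$-action on $S^4$ has a bigon as orbit space, not a fibration over $S^2$ with two singular values. A ``round'' collapse along a circle has no holomorphic realization, since critical loci of holomorphic maps are complex-analytic.

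The paper instead uses the same vanishing class $m$ at both ends, with the re-marking $G_0=R_T\oplus R_V$ at $\infty$. Each reflection reverses the orientation of its two-torus, so the four-dimensional factor has one positive and one negative node. A deformation $P_{\mathrm{tw}}(\lambda q)$, $\lambda\to0$, holding the boundary period coordinates fixed, then gives $X\cong_{\mathrm{diff}}\T^2\times Y$. Crucially, the gluing includes a fibrewise holomorphic translation. With the linear re-marking alone, the four-dimensional gluing map is $[\theta,\mu,\nu]\mapsto[-\theta,\mu,-\nu]$. It sends the bounding section loop $\theta_0$ to the bounding loop $\theta_\infty^{-1}$, and van Kampen gives $\pi_1(Y)\cong\Z$. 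The paper uses the section $u(t)=-\tfrac{t(t+1)}2p_2(t)+t\,p_4(t)$, whose real lift on $|q|=1$ is $(0,-\tfrac{\theta(\theta+1)}2,0,\theta)$. It gives $\psi\circ\theta_0\simeq\theta_\infty^{-1}*\ell_\infty^{-1}$, which kills $\ell$. Then $\pi_1(Y)=0$ and $\chi(Y)=2$, so $Y$ is a homotopy four-sphere. Hirsch's theorem yields $\T^2\times Y\cong_{\mathrm{diff}}\T^2\times S^4$ without identifying $Y$ with $S^4$; your plan also lacks this recognition step.
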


The resulting threefold is non-K\"ahler and admits no nontrivial
biholomorphic product decomposition. In fact, these properties
hold for every complex structure on the same smooth manifold.
\begin{proposition}\label[proposition]{prop:nonproduct}
Let \(Z\) be a compact complex manifold diffeomorphic to
\(\T^2\times S^4\). Then \(Z\) is non-K\"ahler and is not
biholomorphic to a product of compact complex
manifolds.
\end{proposition}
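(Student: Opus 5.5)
Both assertions follow from the cohomology ring \(H^*(\T^2\times S^4;\R)\), which by Künneth is \(\Lambda(a,b)\otimes \R[s]/(s^2)\) with \(a,b\in H^1\) and \(s\in H^4\). In particular \(H^2=\R\,ab\), \(H^4=\R s\) and \(H^6=\R\,abs\).

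\textbf{Non-K\"ahler.} Suppose \(Z\) carries a K\"ahler form \(\omega\). Its class \([\omega]\in H^2(Z;\R)\) must satisfy \([\omega]^3\neq0\), since it integrates to \(6\,\mathrm{vol}>0\). But \(H^2=\R\,ab\) and \((ab)^2=0\) because \(a^2=0\). Hence \([\omega]^2=0\), a contradiction. The same argument works whenever \(b_2\) is spanned by a class with vanishing square, so no Hodge theory is needed.

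\textbf{No product decomposition.} Suppose \(Z\cong Z_1\times Z_2\) biholomorphically with \(\dim_\C Z_i\ge1\). Then up to order \(\dim_\C Z_1=1\) and \(\dim_\C Z_2=2\). So \(Z_1\) is a compact Riemann surface \(\Sigma_g\), and \(Z_2\) is a compact complex surface whose underlying 4-manifold \(N\) satisfies \(\Sigma_g\times N\simeq \T^2\times S^4\).

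First compare Euler characteristics: \(0=\chi(\Sigma_g)\chi(N)\). Next compare \(\pi_1\). We have \(\Z^2=\pi_1(\Sigma_g)\times\pi_1(N)\), so \(\pi_1(\Sigma_g)\) is a direct factor of \(\Z^2\) and is therefore abelian. This forces \(g\le1\).

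If \(g=0\), then \(\pi_1(N)=\Z^2\). By Künneth, \(b_1(Z)=b_1(N)=2\) and \(b_2(Z)=b_0(N)+b_2(N)=1\), so \(b_2(N)=0\). Then \(b_3(Z)=b_1(N)+b_3(N)=2+2=4\), contradicting \(b_3(\T^2\times S^4)=0\).

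If \(g=1\), then \(\pi_1(N)=1\) and \(b_2(Z)=b_2(N)+b_1(\T^2)b_1(N)+b_0(N)=b_2(N)+1\), so \(b_2(N)=0\). Thus \(N\) is a simply connected closed 4-manifold with \(b_2=0\), i.e.\ a homotopy \(S^4\). Its Euler characteristic is \(2\), which is odd-free but harmless; what matters is that such an \(N\) cannot carry an almost complex structure. Indeed, for an almost complex 4-manifold, \(c_1^2=2\chi+3\sigma\) and \(c_1\in H^2=0\) give \(0=2\cdot2+0=4\), a contradiction. This is the Hopf/Wu obstruction, which applies to homotopy spheres because it only uses \(\chi\) and \(\sigma\).

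So no decomposition exists. The main point requiring care is that we only have a homotopy equivalence (in fact a diffeomorphism), not knowledge of the smooth structure of \(N\). The Chern number argument uses only homotopy invariants, so it sidesteps the unknown smooth 4-dimensional Poincaré conjecture.
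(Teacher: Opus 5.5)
Your proof is correct. The non-K\"ahler part is exactly the paper's argument: \(H^2\) is spanned by a class of square zero, so no class has nonzero cube.

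For the non-product part, you use the same key obstruction as the paper, namely \(c_1^2=2\chi+3\sigma\) on a compact complex surface with \(b_2=0\). You organize it differently, though.

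\textbf{The paper's route.} It uses only \(b_1(Z)=2\) and \(b_2(Z)=1\). K\"unneth gives \(2=2h+b_1(S)\) and \(1=1+b_2(S)+2h\,b_1(S)\). Hence \(b_2(S)=0\) and \(\chi(S)=2-2b_1(S)\in\{-2,2\}\). One application of \(c_1^2=2\chi(S)+3\sigma(S)\), with \(c_1^2=\sigma(S)=0\), then rules out both cases \(h=0\) and \(h=1\) at once.

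\textbf{Your route.} You bound the genus via \(\pi_1\), although the \(b_1\) equation \(2g+b_1(N)=2\) already forces \(g\le1\). You dispose of \(g=0\) by a purely cohomological count (\(b_3\), or equivalently \(\chi(Z)=2\chi(N)\) with \(\chi(N)=-2\)). You then apply the Chern-number identity only for \(g=1\).

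\textbf{Comparison.} Your version makes the essential point visible: the surface factor would be a homotopy four-sphere, which carries no almost complex structure. It also settles \(g=0\) without characteristic classes. The paper's version is shorter and uniform, and it needs less topological input: two Betti numbers rather than \(\pi_1\) and \(b_3\).

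\textbf{Minor points.} Identifying \(N\) as a homotopy \(S^4\) is more than you need. Since \(b_1(N)=b_2(N)=0\), duality already gives \(H^2(N;\Z)=0\), \(\chi(N)=2\) and \(\sigma(N)=0\), which is all the Chern-number step uses. The comparison \(0=\chi(\Sigma_g)\chi(N)\) is never used. The phrase ``odd-free but harmless'' is meaningless and should be deleted.
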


We outline the proof of \cref{thm:main}. A complex two-torus
is a quotient \(\C^2/\Lambda\), where the lattice \(\Lambda\)
is generated by four real-linearly independent vectors.
On the logarithmic cover \(q=e^{2\pi it}\), these generators
are chosen as the columns of a holomorphic matrix \(P(t)\).
We verify that the associated real matrix \(L_{P(t)}\) is
nonsingular and that \(P(t+1)=P(t)M\), where \(M\) is the
integral monodromy matrix fixed in \cref{sec:topology}.
The lattice quotients then descend to a holomorphic torus family
over \(\C^*\).

To obtain a compact total space, we add singular fibres over
\(0\) and \(\infty\). Near \(0\), the period transformations
extend to an infinite chain of charts \(xy=q\).
The extended action is free and proper, and the
quotient has a proper projection to a disc. The
same construction applies at \(\infty\). These are the local charts
of the rank-one Tate model
\cite[Section~1.1.3]{KajiwaraKatoNakayama2008}; related Tate
completions and quadratic logarithmic translations occur in
\cite[Section~3]{DiScalaKasuyaZuddas2017}.
A holomorphic translation identifies the two completed ends
over their common regular part.

The next task is to identify the underlying smooth manifold, we deform each
completed compact block to a product model, keeping the base
and all boundary period coordinates fixed. After absorbing a
fixed reflection into the torus coordinates on one block, the
attaching map is the identity on the real two-torus factor. Thus
\(X\cong_{\mathrm{diff}}\T^2\times Y\), where \(Y\) is a
closed smooth four-manifold obtained by gluing two one-node blocks.
The gluing calculation gives \(\pi_1(Y)=0\) and
\(\chi(Y)=2\), so \(Y\) is a smooth homotopy four-sphere.
Hirsch's theorem \cite[Theorem~6(a)]{Hirsch1965} then yields
\(\T^2\times Y\cong_{\mathrm{diff}}\T^2\times S^4\),
without requiring an identification of \(Y\) itself with the
standard smooth \(S^4\). The product identification can moreover
be chosen so that a real two-torus in a regular fibre of \(g\)
represents a generator of \(H_2(X;\Z)\);
see \cref{rem:fibre-generator}.



Taking the product of \(k\) copies of the threefold in
\cref{thm:main}, together with a complex torus of complex
dimension \(m-k\) when \(m>k\), gives the following corollary.
\begin{corollary}\label[corollary]{cor:products-four-spheres}
Let \(m\ge k\ge1\) be integers. Then
\(\T^{2m}\times(S^4)^k\) admits a complex structure of complex
dimension \(m+2k\).
\end{corollary}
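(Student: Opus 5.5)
The corollary follows from \cref{thm:main} by taking products, so the proof is a bookkeeping argument about dimensions and diffeomorphism types. Let \(X\) be the compact complex threefold of \cref{thm:main}, and fix a diffeomorphism \(X\cong_{\mathrm{diff}}\T^2\times S^4\). For integers \(m\ge k\ge1\), consider the compact complex manifold
\[
Z=X^{k}\times A,
\]
where \(X^k\) denotes the \(k\)-fold product of \(X\) with itself. Here \(A=\C^{m-k}/\Lambda\) is any complex torus of complex dimension \(m-k\), with \(\Lambda\subset\C^{m-k}\) a lattice of rank \(2(m-k)\). When \(m=k\) we simply omit \(A\), so \(Z=X^k\).

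First, \(Z\) is a complex manifold. A finite product of complex manifolds is again a complex manifold: the product charts have holomorphic transition maps, since each component transition is holomorphic. Its complex dimension is \(3k+(m-k)=m+2k\).

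Second, we identify the smooth type of \(Z\). The product of the chosen diffeomorphisms gives
\[
Z\cong_{\mathrm{diff}}(\T^2\times S^4)^k\times \T^{2(m-k)},
\]
because \(A\) is diffeomorphic to \(\R^{2(m-k)}/\Z^{2(m-k)}=\T^{2(m-k)}\), using any real basis of \(\Lambda\). Permuting the factors is a diffeomorphism, and it carries the right-hand side to
\[
\T^{2k}\times\T^{2(m-k)}\times(S^4)^k=\T^{2m}\times(S^4)^k .
\]
Transporting the complex structure of \(Z\) along the composite diffeomorphism gives the desired complex structure on \(\T^{2m}\times(S^4)^k\).

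No step here is a genuine obstacle; all the substance lies in \cref{thm:main}. The only point to check is that the torus factors combine correctly, namely \(\T^{2}\times\cdots\times\T^2\times\T^{2(m-k)}=\T^{2m}\). This holds by the convention \(\T^r=\R^r/\Z^r\).
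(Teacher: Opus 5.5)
Your proposal is correct and follows the paper's approach: the paper also obtains the corollary by taking \(k\) copies of the threefold \(X\) from \cref{thm:main}, multiplied by a complex torus of complex dimension \(m-k\) when \(m>k\). Your dimension count \(3k+(m-k)=m+2k\) and your regrouping of factors \((\T^2\times S^4)^k\times\T^{2(m-k)}\cong\T^{2m}\times(S^4)^k\) are exactly the needed bookkeeping.
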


The paper is organized as follows. We fix the boundary markings
and describe the one-node blocks in \cref{sec:topology}.
\Cref{sec:periods,sec:filling} construct the torus family and
its analytic completions. In \cref{sec:relative}, we establish
the relative smooth product identifications needed for gluing.
The holomorphic gluing is carried out in \cref{sec:global},
and the smooth topology is determined in \cref{sec:recognition},
completing the proof of \cref{thm:main}.
Finally, \cref{sec:consequences} proves \cref{prop:nonproduct}.

\section{Markings and one-node topology}\label{sec:topology}
This section fixes the notation needed for the construction and its
topological identification. A marking is a choice of an ordered
integral basis for the first homology of a torus fibre. In
\cref{subsec:torus-markings}, we specify the monodromy and the change
of marking at \(\infty\). These enter the period construction in
\cref{sec:periods}. The description of the
Tate one-node block in \cref{subsec:node-topology} includes a
computation of the homomorphism induced by its boundary inclusion.
Together with the attaching map in
\cref{sec:recognition}, this computation will determine the
fundamental group of the four-dimensional factor of \(X\).

\subsection{Lattice markings and monodromy}\label{subsec:torus-markings}
Let \(F\cong\T^2\) be a regular fibre of the positive one-node
model described in \cref{subsec:node-topology}. Choose an ordered
integral basis \((m,\ell)\) of \(H_1(F;\Z)\), where \(m\) is the
primitive vanishing class and \(m\cdot\ell=1\).
Throughout this subsection, matrices act on column vectors in
the indicated ordered basis. The monodromy around a
counterclockwise loop about the critical value is represented by
\begin{equation}\label{eq:T}
 T=\begin{pmatrix}1&1\\0&1\end{pmatrix},
 \qquad Tm=m,\quad T\ell=\ell+m.
\end{equation}
The geometric choice of \(m,\ell\)
and the sign of this monodromy are explained in
\cref{subsec:node-topology}; the same matrix follows directly
from \eqref{eq:tate-coordinate-seam}.

In the smooth product models used later, a second real two-torus
does not degenerate. Write \(a_T,b_T\) for an ordered integral
basis of its first homology. The first integral homology of the
product fibre is therefore
\[
 \Lambda=\Z a_T\oplus\Z m\oplus\Z b_T\oplus\Z\ell
 \cong\Z^4.
\]
We denote the following ordered integral basis of \(\Lambda\)
by \(\mathcal B\):
\begin{equation*}
 \mathcal B=(a_T,m,b_T,\ell)=(e_1,e_2,e_3,e_4).
\end{equation*}
Here the marking identifies these four homology classes with
the standard coordinate vectors \(e_1,e_2,e_3,e_4\) of \(\Z^4\).
The local monodromy fixes \(a_T,b_T\) and acts by \(T\) on
\((m,\ell)\). In the ordered basis \(\mathcal B\), grouped as
\((a_T,m)\mid(b_T,\ell)\) for the period construction, its
matrix \(M\in\operatorname{SL}(4,\Z)\) is
\begin{equation*}
 B=\begin{pmatrix}0&0\\0&1\end{pmatrix},
 \qquad M=\begin{pmatrix}I_2&B\\0&I_2\end{pmatrix}.
\end{equation*}
The matrix \(B\) is not itself a monodromy automorphism.
The geometric splitting of the lattice is instead
\[
 \Lambda=R\oplus D,\qquad
 R=\Z e_1\oplus\Z e_3,\quad D=\Z e_2\oplus\Z e_4.
\]
Thus \(R\) contains the two nondegenerating directions,
\(D\) contains the two directions of the degenerating torus,
and \(M|_R=\Id\), \(M|_D=T\).
This induces a splitting of the underlying real tori; it need not
be a holomorphic splitting of the complex fibres.

The matrix \(M\) will be the prescribed monodromy of the torus
family constructed in \cref{sec:periods}.

At \(\infty\) we use the integral re-marking
\begin{equation}\label{eq:G0}
 G_0=\operatorname{diag}(1,1,-1,-1)=R_T\oplus R_V,
\end{equation}
where the direct sum refers to \(R\oplus D\):
\[
 R_T(a_T)=a_T,\quad R_T(b_T)=-b_T,\qquad
 R_V(m)=m,\quad R_V(\ell)=-\ell.
\]
The two reflections have determinant \(-1\), while \(G_0\)
has determinant \(1\). Direct calculation gives
\begin{equation*}
 \begin{aligned}
 G_0MG_0^{-1}
 =\begin{pmatrix}I_2&-B\\0&I_2\end{pmatrix}=M^{-1},\qquad
 R_VTR_V^{-1}
 =\begin{pmatrix}1&-1\\0&1\end{pmatrix}=T^{-1}.
 \end{aligned}
\end{equation*}
Here \(q\) is the affine coordinate on \(\Pone\), and
\(w=q^{-1}\) is the coordinate at \(\infty\). The change of marking
\(G_0\) compensates for the reversal of a loop under
\(q\mapsto q^{-1}\). Thus the local monodromy at each end is
represented by \(M\) in its own marking. We shall use a positive
Tate model at each end.

\subsection{Tate one-node block}\label{subsec:node-topology}
We now describe the four-dimensional block that occurs in the smooth
product models of \cref{sec:relative}. Its real fibre coordinates
will also be used to write the attaching map in
\cref{sec:recognition}.

By a \emph{node} of a complex curve we mean a singular point whose
analytic germ is isomorphic to the germ of
\(\{(x,y)\in\C^2:xy=0\}\) at the origin; its two smooth local
branches meet transversely. The local smoothing is \(xy=q\):
the total space is smooth, while the central fibre has a node.

For \(\kappa>0\) and a sufficiently small closed disc
\(\Delta\) centred at \(0\), let
\(\pi_\kappa:N_\kappa\to\Delta\) denote the standard one-node
completion of
\(\C^*/(\kappa q)^\Z\) over \(q\ne0\).
This is the classical Tate nodal completion; see
\cite{Kodaira1963} and
\cite[Section~3]{DiScalaKasuyaZuddas2017}.
The completion is a proper holomorphic map from a smooth complex
surface over an open neighbourhood of \(\Delta\);
\(N_\kappa\) is its restriction over the closed disc.
Thus \(N_\kappa\) is a compact smooth four-manifold with boundary.
Its regular fibres \(F_q=\pi_\kappa^{-1}(q)\) are smooth
genus-one curves. The central fibre \(F_0\) is an irreducible
rational curve with one node, and the unique critical point
has the local model \(q\circ\pi_\kappa(x,y)=xy\).
This is the positive nodal model with respect to the complex
orientations. A smaller disc and
the addition of a regular outer collar give the same type of
marked piece. We use \(N_0,N_\infty\) for two copies, with their
own local base coordinates.

The basis used in \cref{subsec:torus-markings} can be chosen as follows.
For a sufficiently small positive real \(q_0\), the circle
\[
 \gamma_{q_0}(s)
 =\bigl(\sqrt{q_0}\,e^{2\pi is},
        \sqrt{q_0}\,e^{-2\pi is}\bigr),
 \qquad s\in\R/\Z,
\]
lies in \(F_{q_0}\) and contracts to the node as \(q_0\to0\).
It is the \emph{vanishing cycle}. Since \(F_0\) is irreducible,
this circle is nonseparating. Choose an oriented simple closed
curve \(\delta\subset F_{q_0}\) meeting \(\gamma_{q_0}\)
transversely once with positive intersection.
Then \(m=[\gamma_{q_0}]\), \(\ell=[\delta]\) form an ordered
integral basis of \(H_1(F_{q_0};\Z)\), with \(m\cdot\ell=1\).
In particular, \(m\) is primitive: it is not a multiple \(kv\)
with \(|k|>1\) and \(v\in H_1(F_{q_0};\Z)\).
The monodromy along \(q(s)=q_0e^{2\pi is}\), \(0\le s\le1\),
is a right-handed Dehn twist about this vanishing cycle
\cite[Section~2]{DiScalaKasuyaZuddas2017}.
It fixes \(m\) and sends \(\ell\) to \(\ell+m\), as recorded
in \eqref{eq:T}.

For the two blocks used below, \(\kappa=e^{-2\pi}\).
We introduce real Tate coordinates on their regular parts by writing
\[
 q=re^{2\pi i\theta},
 \quad
 t=\theta+\frac{\log r}{2\pi i},
 \quad
 \theta\in\R,\quad 0<r<\kappa^{-1}.
\]
The relation \(e^{2\pi i(t+i)}=\kappa q\) allows us to
pass between additive and multiplicative coordinates:
\[
 \begin{aligned}
 \C/(\Z+(t+i)\Z)
 &\xrightarrow{\;\cong\;}\C^*/(\kappa q)^\Z,\\
 [\zeta]&\longmapsto[e^{2\pi i\zeta}].
 \end{aligned}
\]
Since
\(
 \operatorname{Im}(t+i)
 =1-\frac{\log r}{2\pi}>0,
\)
the numbers \(1\) and \(t+i\) always form a real basis of \(\C\).
Hence each point of the fibre has a unique expression
\([\mu+(t+i)\nu]\), with \(\mu,\nu\in\R/\Z\).
Because 
\(
 \mu+(t+1+i)\nu=(\mu+\nu)+(t+i)\nu,
\)
the corresponding fibre coordinates are related by
\begin{equation}\label{eq:tate-coordinate-seam}
 (\theta+1,\mu,\nu)
 \sim
 (\theta,\mu+\nu,\nu).
\end{equation}
We use \([\theta,\mu,\nu]\) to denote the equivalence class under
this identification. Thus
\begin{equation}\label{eq:real-tate-coordinates}
 \begin{aligned}
 (r,[\theta,\mu,\nu])
 &\longleftrightarrow
 \bigl(q,[\mu+(t+i)\nu]_{\Z+(t+i)\Z}\bigr)\\
 &\longleftrightarrow
 \bigl(q,[e^{2\pi i(\mu+(t+i)\nu)}]_{(\kappa q)^\Z}\bigr).
 \end{aligned}
\end{equation}
We call \((r,[\theta,\mu,\nu])\) \textit{the real Tate coordinates}.

When \(r=r_0\) is fixed, these coordinates give the
mapping torus
\[
 \frac{\R_\theta\times(\R/\Z)^2_{\mu,\nu}}
 {(\theta+1,\mu,\nu)\sim(\theta,\mu+\nu,\nu)}.
\]
The marked loops based at \([0,0,0]\) are
\[
 \begin{aligned}
 m(a)&=[0,a,0],\\
 \ell(a)&=[0,0,a],\\
 \theta(a)&=[a,0,0],
 \end{aligned}
 \qquad 0\le a\le1.
\]

For the blocks over the unit discs used later, the
boundary is given by \(r=1\). Allowing \(r\) to vary
near \(1\) gives the regular collar used in the gluing. These are smooth real coordinates on the family over
the punctured disc. At the central fibre, we use the
plumbing charts.

\begin{lemma}\label[lemma]{lem:node-topology}
A one-node block \(N\) with primitive vanishing meridian
\(m\) satisfies
\[
 \pi_1(N)=\Z\langle\ell\rangle,
 \quad \chi(N)=1.
\]
For the boundary loops defined above, the inclusion
\(i:\partial N\hookrightarrow N\) induces
\[
 i_*(m)=i_*(\theta)=1,
 \quad i_*(\ell)=\ell.
\]
\end{lemma}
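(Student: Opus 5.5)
The plan is to exploit the fact that the one-node block \(N\) deformation retracts onto its central fibre \(F_0\), and then to read off the boundary map from the geometry of the vanishing cycle and the base circle. First we show that \(N\) retracts onto \(F_0\). Since \(\pi:N\to\Delta\) is proper and is a submersion away from the single critical point, the standard Lefschetz/Kodaira argument gives a deformation retraction of \(\pi^{-1}(\Delta)\) onto \(\pi^{-1}(0)\). One can see this either by the local model \(xy=q\) near the node together with Ehresmann's theorem away from it, or by triangulating the analytic map and applying the retraction theorem for proper maps over a disc. Thus \(N\simeq F_0\).

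Next we identify \(F_0\). It is an irreducible rational curve with one node, so it is homeomorphic to \(S^2\) with two points identified. This space is homotopy equivalent to \(S^2\vee S^1\). Hence \(\pi_1(N)\cong\Z\) and \(\chi(N)=\chi(S^2)-1=1\). As a cross-check, we can compute \(\chi(N)=\chi(F_q)\cdot\chi(\Delta)+(\text{number of nodes})=0+1\), using the additivity of Euler characteristic over the singular fibre.

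Then we compute \(i_*\) on the three boundary loops. The loop \(m\) is the vanishing cycle \(\gamma_{q_0}\), which contracts to the node as \(q_0\to0\) by its explicit formula. This contraction is a homotopy inside \(N\), so \(i_*(m)=1\). For the loop \(\theta\), the point \([0,0,0]\) corresponds to \(\mu=\nu=0\), that is, to the point \(e^{0}=1\in\C^*/(\kappa q)^\Z\) in every fibre. Thus \(\theta(a)\) is the loop \(q=e^{2\pi i a}\) traced by a section \(s(q)=[1]\), and this section extends holomorphically over \(q=0\) to a smooth point of \(F_0\). The section \(s\) is defined over the whole disc \(\Delta\), which is contractible, so the loop \(\theta=s(\partial\Delta)\) bounds the disc \(s(\Delta)\) in \(N\). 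Hence \(i_*(\theta)=1\).

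For the loop \(\ell\), the fibre \(F_{q_0}\) maps onto \(F_0\) under the retraction by collapsing \(m\) to the node. The loop \(\ell\) meets \(m\) once, so its image passes once through the node. It therefore maps to the generator of \(\pi_1(S^2\vee S^1)\), and this generator is what we call \(\ell\). Equivalently, \(\pi_1(N)\) is the quotient of \(\pi_1(\partial N)\), which is generated by \(m,\ell,\theta\), by the normal closure of \(m\) and \(\theta\). This matches the handle description of \(N\) as \(D^2\times T^2\) with a \(2\)-handle attached along the vanishing cycle with framing \(-1\).

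The main obstacle is the rigorous justification that \(\theta\) is nullhomotopic. This requires checking that the constant section \([1]\) really extends through the Tate completion into the smooth locus of \(F_0\) rather than into the node. In the chain of charts \(xy=q\), the point \(e^{2\pi i\zeta}\) with \(\zeta=0\) has \(|x|=1\), which places it in the interior of a component chart away from the node for small \(q\). Verifying this extension in the plumbing chart is the step where I expect the real care to be needed.
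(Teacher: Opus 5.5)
Your proposal is correct and takes essentially the same route as the paper. Both arguments deformation retract the block onto the nodal central fibre \(C_0\simeq S^2\vee S^1\), use the collapse of the vanishing cycle \(m\) to identify the image of \(\ell\) as the generator, and use the extended unit section \([1]\) to show that \(\theta\) bounds a disc. The step you flag as the main obstacle is in fact immediate: in the chart \(V_0\) the class \([1]\) is represented by \((x_0,y_0)=(q,1)\), which tends to the smooth point \((0,1)\) of the central fibre, not to the node.
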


\begin{proof}
Let \(C_0\) be the central nodal fibre.
For a sufficiently small base disc, the total space
deformation retracts onto \(C_0\)
\cite[Section~4.1]{Deligne1974}.
Adding a regular outer collar does not change the
homotopy type.

To identify the boundary homomorphism, let \(F\subset\partial N\)
be the fibre over the positive real boundary point. Transport
\(F\) inward along the positive real radius, keeping the real
Tate coordinates \((\mu,\nu)\) fixed on the regular outer collar.
In the plumbing model \(xy=q\), letting \(q>0\) decrease to
\(0\) collapses the core circle of the smoothing annulus,
which represents the vanishing class \(m\). Together with a
smooth trivialization of its complement along this radial
segment, this gives a collapse map \(c:F\to C_0\).
Regarded as maps into \(N\), this map and the inclusion of
\(F\) are homotopic, with the basepoint following the unit
section. Using this path to identify basepoints, the inclusion
\(F\hookrightarrow N\), followed by the retraction to \(C_0\),
therefore induces \(c_*\) on fundamental groups.

Since \(C_0\) is obtained by collapsing a representative of
\(m\) in \(F\),
\[
 N\simeq C_0\cong\T^2/m\simeq S^2\vee S^1.
\]
The map \(c\) kills the class \(m\) and sends \(\ell\)
to a generator of the circle factor.
Thus
\[
 \begin{aligned}
 \pi_1(N)
 &\cong\langle m,\ell\mid[m,\ell]=1,\ m=1\rangle
   =\Z\langle\ell\rangle,\\
 \chi(N)&=\chi(S^2)+\chi(S^1)-1=1.
 \end{aligned}
\]
In the standard Tate completion, the unit section represented
by \(1\in\C^*/(\kappa q)^\Z\) extends to the smooth point
represented by \(1\) in the normalization \(\Pone\) of
\(C_0\). On the boundary, this section is given by
\(\mu=\nu=0\), so its boundary is exactly the marked loop
\(\theta\). Thus \(\theta\) bounds the section disc in \(N\).
Hence \(i_*(m)=i_*(\theta)=1\) and \(i_*(\ell)=\ell\).

\end{proof}

\section{Construction of the holomorphic torus family}\label{sec:periods}

In this section, we construct the regular part of the map in
\cref{thm:main} as a holomorphic family
\(p:\mathcal G\to\C^*\) of complex two-tori with the monodromy
\(M\) fixed in \cref{sec:topology}. A real determinant calculation
verifies that the chosen periods generate full lattices.
Quotienting by lattice translations in the fibres and then by
the deck action on the logarithmic cover gives the required
family. Its completion at \(0\) and \(\infty\) is treated in
\cref{sec:filling}.

By a \emph{holomorphic family of compact complex manifolds} we mean
a proper holomorphic submersion
\(\pi:\mathcal X\to B\) between complex manifolds. A \emph{holomorphic isomorphism} between two families \(\pi:\mathcal X\to B\) and \(\pi':\mathcal X'\to B'\)
over a biholomorphism \(\beta:B\to B'\) is a biholomorphism
\(F:\mathcal X\to\mathcal X'\) satisfying
\(\pi'\circ F=\beta\circ\pi\).

\subsection{Periods and real linear independence}
\label{subsec:linear-periods}

For \(P\in M_{2\times4}(\C)\), set
\[
 L_P=\begin{pmatrix}\operatorname{Re}P\\\operatorname{Im}P\end{pmatrix}
 \in M_4(\R).
\]
This is the matrix of the real-linear map
\(\R^4\to\C^2\), \(v\mapsto Pv\), in the ordered coordinates
\((\operatorname{Re}\zeta_1,\operatorname{Re}\zeta_2,
\operatorname{Im}\zeta_1,\operatorname{Im}\zeta_2)^t\).
A \emph{full lattice} is the integer span of a real basis
\cite[Example~1.18, p.~11]{Lee2024}; hence
\[
 P\Z^4\text{ is a full lattice in }\C^2
 \quad\Longleftrightarrow\quad\det L_P\ne0.
\]
In particular, for \(P=(I_2\ Z)\),
\[
 L_{(I_2\ Z)}
 =\begin{pmatrix}I_2&\operatorname{Re}Z\\0&\operatorname{Im}Z\end{pmatrix},
 \qquad \det L_{(I_2\ Z)}=\det\operatorname{Im}Z.
\]

\subsection{The global family as two successive quotients}
\label{subsec:global-periods}
On the logarithmic cover \(q=e^{2\pi it}\), we seek a
holomorphic period matrix \(P(t)\) whose columns correspond
to the ordered basis \(\mathcal B\) and satisfy
\(P(t+1)=P(t)M\). Since \(M\Z^4=\Z^4\), this condition gives
\(P(t+1)\Z^4=P(t)\Z^4\), so the lattice depends on \(q\),
not on the choice of its logarithmic lift. The lattice quotients
can therefore descend to \(\C_q^*\).
The factor involving \(tB\) in \eqref{eq:P} is chosen to
impose this equivariance.
Set
\begin{equation*}
 P_{\mathrm{tw}}(q)=
 \begin{pmatrix}1&-q&i&iq\\q&1&-iq&i\end{pmatrix},
 \qquad q=e^{2\pi it}.
\end{equation*}
The matrix \(P_{\mathrm{tw}}(q)\) describes the standard
twistor family of linear complex structures on \(\R^4\);
see \cite{BuskinIzadi2021} for the general construction
of twistor lines in the period domain of complex tori.
The properties needed here are verified directly below.

\begin{proposition}\label[proposition]{prop:period-bridge}
The matrix
\begin{equation}\label{eq:P}
 P(t)=P_{\mathrm{tw}}(q)
 \begin{pmatrix}I_2&tB\\0&I_2\end{pmatrix}
 =\begin{pmatrix}1&-q&i&(i-t)q\\q&1&-iq&t+i\end{pmatrix}
\end{equation}
has real-linearly independent columns for every \(t\in\C\),
depends holomorphically on \(t\), and satisfies
\begin{equation}\label{eq:equivariance}
 P(t+1)=P(t)M.
\end{equation}
The successive quotients
\[
 \C\times\C^2\longrightarrow
 \widetilde{\mathcal G}=(\C\times\C^2)/\Z^4
 \longrightarrow\mathcal G=\widetilde{\mathcal G}/\Z
\]
define a holomorphic family
\(p:\mathcal G\to\C^*\), with complex two-torus fibres
and marked monodromy \(M\).
\end{proposition}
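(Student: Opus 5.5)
The plan is to verify the three analytic assertions about \(P(t)\) directly, and then to check that the two group actions are free, properly discontinuous and holomorphic. Holomorphy is immediate, since every entry of \eqref{eq:P} is a polynomial in \(t\) and \(q=e^{2\pi it}\). For \eqref{eq:equivariance}, note that \(q\) is invariant under \(t\mapsto t+1\), so \(P_{\mathrm{tw}}(q)\) is unchanged. The unipotent factor satisfies
\[
\begin{pmatrix}I_2&tB\\0&I_2\end{pmatrix}\begin{pmatrix}I_2&B\\0&I_2\end{pmatrix}=\begin{pmatrix}I_2&(t+1)B\\0&I_2\end{pmatrix},
\]
and this product identity gives \(P(t+1)=P(t)M\).

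For real-linear independence I will not expand the \(4\times4\) real determinant blindly. Write the columns as \(v_1=(1,q)\), \(v_2=(-q,1)\), \(v_3=i(1,-q)\) and \(v_4=w+tv_2\), where \(w=i(q,1)\). The columns \(v_1,v_2,v_3,w\) of \(P_{\mathrm{tw}}(q)\) form a real basis for every \(q\). Indeed \(\det L_{P_{\mathrm{tw}}}\) is a positive multiple of \((1+|q|^2)^2\); I will confirm this by a short computation, for instance via \(\det L_P=\pm\tfrac14\det\begin{pmatrix}P\\\bar P\end{pmatrix}\).

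Next write \(t=x+iy\), so that \(tv_2=xv_2+y\,(iv_2)\). Expand \(iv_2=i(-q,1)\) in the real basis \((v_1,v_2,v_3,w)\), and let \(c(q)\) be its \(w\)-coefficient. The real change of basis from \((v_1,v_2,v_3,w)\) to \((v_1,v_2,v_3,v_4)\) is triangular with diagonal entries \(1,1,1,1+y\,c(q)\). Hence
\[
\det L_{P(t)}=\det L_{P_{\mathrm{tw}}(q)}\cdot\bigl(1+y\,c(q)\bigr).
\]
Solving the small linear system for \(iv_2\), I expect \(c(q)=(1-|q|^2)/(1+|q|^2)\), possibly up to a sign to be checked.

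The key point, and the expected main obstacle, is that \(1+y\,c(q)\neq0\) uses the coupling \(|q|=e^{-2\pi y}\) between \(q\) and \(t\). This coupling turns the factor into \(1+y\tanh(2\pi y)\), which is \(\geq1\) because \(y\) and \(\tanh(2\pi y)\) have the same sign. If the sign of \(c\) came out opposite, the factor would vanish for some \(y\), so this sign must be checked carefully. The determinant convention fixed in \cref{subsec:linear-periods} determines it.

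Given the full lattices, \(\Z^4\) acts on \(\C\times\C^2\) by \((t,z)\mapsto(t,z+P(t)n)\). This action is holomorphic and fibre-preserving. It is free, and it is properly discontinuous because \(P(t)\) is continuous with \(\det L_{P(t)}\neq0\); locally in \(t\) the lattices are uniformly discrete. The quotient \(\widetilde{\mathcal G}\to\C\) is therefore a proper holomorphic submersion with fibres \(\C^2/P(t)\Z^4\).

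Next define \(\Z\)-generator by \((t,z)\mapsto(t+1,z)\). By \eqref{eq:equivariance} and \(M\Z^4=\Z^4\), it maps \(P(t)\Z^4\)-orbits to \(P(t+1)\Z^4\)-orbits, so it descends to \(\widetilde{\mathcal G}\). It covers the free, properly discontinuous translation of the base, so the quotient \(\mathcal G\) is a complex manifold with a proper submersion \(p:\mathcal G\to\C/\Z\cong\C^*\).

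Finally, the monodromy is read off by transporting the marked basis (the columns of \(P\)) along \(t\mapsto t+1\). This identifies the columns of \(P(t+1)\) with those of \(P(t)M\), so the marked monodromy is \(M\).
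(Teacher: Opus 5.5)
Your proposal is correct and follows the paper's proof. The triangular change of basis does give $c(q)=(1-|q|^2)/(1+|q|^2)$ and $\det L_{P_{\mathrm{tw}}(q)}=(1+|q|^2)^2$, so your factorization $\det L_{P(t)}=(1+|q|^2)^2\bigl(1+y\tanh(2\pi y)\bigr)$ is exactly the paper's formula \eqref{eq:global-period-det}, and your two quotient steps and monodromy reading match Steps~2--3. One small correction: the sign of $c(q)$ is intrinsic to the real basis expansion and is not set by the ordering convention for $L_P$, because any such convention changes $\det L_{P(t)}$ and $\det L_{P_{\mathrm{tw}}(q)}$ by the same sign.
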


\begin{proof}
After verifying the period condition, we construct the two
quotients by showing that each group action is holomorphic,
free, and proper.

\noindent\emph{Step 1. The period calculation.}
For \(r=|q|\), direct calculation gives
\begin{equation}\label{eq:global-period-det}
 \det L_{P(t)}
 =(1+r^2)^2-\frac{\log r}{2\pi}(1-r^4)>0,
\end{equation}
because \(-\log r\,(1-r^4)\ge0\) for all \(r>0\).
Every entry of \(P(t)\) is holomorphic in \(t\). The identities
\[
 q(t+1)=q(t),\qquad
 \begin{pmatrix}I_2&(t+1)B\\0&I_2\end{pmatrix}
 =\begin{pmatrix}I_2&tB\\0&I_2\end{pmatrix}M
\]
imply \eqref{eq:equivariance}.

\smallskip
\noindent\emph{Step 2. First quotient: fibrewise lattice translations.}
Set \(\Lambda_t=P(t)\Z^4\), and let
\[
 n\cdot(t,\zeta)=(t,\zeta+P(t)n),\qquad n\in\Z^4.
\]
\smallskip
\noindent\emph{(a) Holomorphicity.}
For fixed \(n\), the entries of \(P(t)n\) are holomorphic.
The inverse transformation is the one defined by \(-n\).

\smallskip
\noindent\emph{(b) Freeness.}
If \(n\cdot(t,\zeta)=(t,\zeta)\), then \(P(t)n=0\).
Since \(P(t)\) is injective as a real-linear map, \(n=0\).

\smallskip
\noindent\emph{(c) Properness.}
Put \(E=\C\times\C^2\). The inverse real-linear map
\(P(t)^{-1}_{\R}:\C^2\to\R^4\) depends smoothly on \(t\).
The action map
\[
 \alpha:\Z^4\times E\longrightarrow E\times E,\qquad
 (n,(t,\zeta))\longmapsto
 \bigl((t,\zeta+P(t)n),(t,\zeta)\bigr)
\]
has closed image
\[
 \operatorname{im}\alpha
 =\left\{\bigl((t,\zeta'),(t,\zeta)\bigr):
        P(t)^{-1}_{\R}(\zeta'-\zeta)\in\Z^4\right\}.
\]
Its inverse on this image is
\[
 \bigl((t,\zeta'),(t,\zeta)\bigr)\longmapsto
 \bigl(P(t)^{-1}_{\R}(\zeta'-\zeta),(t,\zeta)\bigr),
\]
which is continuous. Thus \(\alpha\) is an embedding with closed image.
For any compact subset \(K\subset E\times E\),
\(\alpha^{-1}(K)\cong K\cap\operatorname{im}\alpha\) is compact.
Hence the action is proper.

With these three propertied above, the Holomorphic Quotient Manifold Theorem
\cite[Theorem~1.16]{Lee2024} gives a
Hausdorff complex threefold \(\widetilde{\mathcal G}\) and a
holomorphic covering map onto it. Write its points as
\((t,[\zeta]_t)\), where \([\zeta]_t\in\C^2/\Lambda_t\).
The projection to the first factor is a holomorphic submersion.The smooth
trivialization is given by\
\begin{equation}\label{eq:real-period-trivialization}
 \begin{aligned}
 \C\times\T^4&\longrightarrow\widetilde{\mathcal G},
 &(t,[v])&\longmapsto(t,[P(t)v]_t).
 \end{aligned}
\end{equation}

\smallskip
\noindent\emph{Step 3. Second quotient: translations of the base.}
The formula \eqref{eq:equivariance} and \(M\in\operatorname{SL}(4,\Z)\) give
\[
 P(t+k)=P(t)M^k,\qquad \Lambda_{t+k}=\Lambda_t,\qquad k\in\Z.
\]
Thus the following action is well defined:
\[
 k\cdot(t,[\zeta]_t)=(t+k,[\zeta]_{t+k}).
\]
\smallskip
\noindent\emph{(a) Holomorphicity.}
Since the first quotient map is locally
biholomorphic, the holomorphicity of this action descends from the biholomorphism
\((t,\zeta)\mapsto(t+k,\zeta)\). The inverse is the transformation defined by \(-k\).

\smallskip
\noindent\emph{(b) Freeness.}
If \(k\cdot(t,[\zeta]_t)=(t,[\zeta]_t)\), then \(t+k=t\),
so \(k=0\).

\smallskip
\noindent\emph{(c) Properness.}
For a compact set \(K\subset\widetilde{\mathcal G}\times\widetilde{\mathcal G}\),
we denote \[A=\operatorname{pr}_1(K)\cup\operatorname{pr}_2(K), \quad F_A=\{k\in\Z:k\cdot A\cap A\ne\varnothing\}.\]
If \(k\cdot A\cap A\ne\varnothing\), then
\(k=t(y)-t(x)\) for some \(x,y\in A\).
Since \(t(A)\) is bounded, the set
\(F_A\) is finite.
The inverse image of \(K\) under the action map
\((k,x)\mapsto(k\cdot x,x)\) is a closed subset
of the compact set \(F_A\times A\), so the action is proper.

Again we use the Holomorphic Quotient Manifold Theorem to obtain a complex threefold
\(\mathcal G=\widetilde{\mathcal G}/\Z\) and a holomorphic
covering map. The \(\Z\)-invariant function \(e^{2\pi it}\) descends to
\[
 p:\mathcal G\longrightarrow\C^*,\qquad
 [(t,[\zeta]_t)]\longmapsto e^{2\pi it}.
\]
Both quotient maps are locally biholomorphic and
\(2\pi i e^{2\pi it}\ne0\), so \(p\) is a holomorphic submersion.
On a base chart \(U\subset\C^*\) with a single-valued holomorphic
logarithmic lift \(t=t(q)\),
\[
 U\times\T^4\longrightarrow p^{-1}(U),\qquad
 (q,[v])\longmapsto[(t(q),[P(t(q))v]_{t(q)})]
\]
is a smooth trivialization. Every base point has a neighbourhood
with compact closure in such a chart. The inverse image of this
closure is diffeomorphic to its product with
\(\T^4\), and is therefore compact. A finite covering of any
compact subset of the base by these neighbourhoods proves that
\(p\) is proper.

In the real coordinates \eqref{eq:real-period-trivialization},
the identity
\(
 P(t)v=P(t+1)M^{-1}v
\)
shows that the \(\Z\)-action takes the form
\[
 (t,[v])\longmapsto(t+1,[M^{-1}v]).
\]
The quotient therefore identifies
\[
(t+1,[v])\sim(t,[Mv]).
\]
In particular, over the circle $\{ q=e^{2\pi i\theta} \}$,
the restriction of $\mathcal G$ is a mapping torus 
\[
 \mathcal G\big|_{\{|q|=1\}}
 \cong
 \bigl([0,1]\times\T^4\bigr)
 \big/(1,[v])\sim(0,[Mv]),
\]
as a smooth bundle over $S^1$.

\end{proof}

\section{Proper holomorphic completion of a rank-one degeneration}\label{sec:filling}

This section completes the family constructed in \cref{sec:periods}
across \(0,\infty\in\Pone\). After normalizing the periods near
\(0\), we construct a smooth complex threefold over a disc and
prove that its projection is proper. At each point of the critical
curve, the projection has the local form \(q=xy\).
The same construction applies at \(\infty\).
Finally, identifications with \(\mathcal G\) over the punctured
discs yield the two completed ends to be glued in \cref{sec:global}.

\subsection{The local completion}
\label{subsec:local-completion}

Set
\begin{equation*}
 A(q)=\begin{pmatrix}1&-q\\q&1\end{pmatrix}.
\end{equation*}
Since \(\det A(q)=1+q^2\), for \(q\ne\pm i\), direct multiplication gives
\begin{equation*}
 A(q)^{-1}P(t)=(I_2\ Z(t)).
\end{equation*}
We decompose
\begin{equation}\label{eq:zero-asymptotic}
 Z(t)=tB+C(q),\qquad C(q)=\frac{i}{1+q^2}
       \begin{pmatrix}1-q^2&2q\\-2q&1-q^2\end{pmatrix}.
\end{equation}
Since \(A\) is singular at \(q=\pm i\),
we use this normalization only on
\[
 \Delta_\epsilon=\{|q|<\epsilon\},\quad 0<\epsilon<1.
\]

The normalization is needed only near the omitted point.
The columns of \(P(t)\) remain real-linearly independent over all
of \(\C^*\), including \(q=\pm i\). The following proposition
constructs the filling from the normalized periods
\((I_2\ Z(t))\).

\begin{proposition}\label[proposition]{prop:rank-one-fill}
Let \(Z(t)=tB+C(q)\) be as in \eqref{eq:zero-asymptotic}, with \(q=e^{2\pi it}\). Denote
\[
\Delta^*_\epsilon=\{q \in \C :0<|q|<\epsilon\}, \quad \Delta_\epsilon=\{q \in \C:|q|<\epsilon\}.
\]
For sufficiently small \(0<\epsilon \le \frac{1}{4}\), the matrices
\((I_2\ Z(t))\) define a holomorphic family
\(\mathcal T^*\to\Delta_\epsilon^*\), and there is a complex threefold \(\overline{\mathcal T}\) with a
proper
holomorphic map
\[
 \pi:\overline{\mathcal T}\longrightarrow\Delta_\epsilon,
\]
together with a
biholomorphic isomorphism of families
\[
 \overline{\mathcal T}|_{\Delta_\epsilon^*}
 \cong\mathcal T^*.
\]
The critical locus of \(\pi\) is a complex torus
\(E_i=\C/(\Z+i\Z)\). At every point of this curve there are local
holomorphic coordinates \((z,x,y)\) in which
\begin{equation*}
 q\circ\pi(z,x,y)=xy.
\end{equation*}
The central fibre is \(E_i\times(\Pone/(0\sim\infty))\).
\end{proposition}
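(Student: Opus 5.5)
The plan is to realize $\overline{\mathcal T}$ as a quotient of a product of $\C_{\zeta_1}$ with the standard Tate chain, by a free and proper $\Z^3$-action built from the normalized periods $(I_2\ Z(t))$. Write the four columns of $(I_2\ Z(t))$ as $e_1=(1,0)^t$, $e_2=(0,1)^t$, and
\[
 v_3(q)=\bigl(c_{11}(q),c_{21}(q)\bigr)^t,\qquad
 v_4(t)=\bigl(c_{12}(q),\,t+c_{22}(q)\bigr)^t,
\]
where $C(q)=(c_{jk}(q))$. From \eqref{eq:zero-asymptotic} we have $c_{11}(0)=i$ and $c_{21}(0)=c_{12}(0)=0$. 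Every $c_{jk}$ is holomorphic on $\Delta_\epsilon$ for $\epsilon\le\frac14$.

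\emph{The family over $\Delta_\epsilon^*$.} I first build $\mathcal T^*$ exactly as in \cref{prop:period-bridge}. The Step 1 determinant bound must be replaced by the formula $\det L_{(I_2\ Z)}=\det\operatorname{Im}Z(t)$. For small $|q|$ this equals $\operatorname{Im}c_{11}\cdot(-\log|q|/2\pi+O(1))+O(|q|)>0$. The equivariance $(I_2\ Z(t+1))=(I_2\ Z(t))M$ holds because $C$ depends only on $q$.

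\emph{Exponentiating.} Next I quotient by $e_2$ by setting $w=e^{2\pi i\zeta_2}\in\C^*$. In the coordinates $(q,\zeta_1,w)$, the remaining generators act by
\[
 e_1:(\zeta_1,w)\mapsto(\zeta_1+1,w),\qquad
 v_3:(\zeta_1,w)\mapsto\bigl(\zeta_1+c_{11}(q),\lambda(q)w\bigr),
\]
\[
 v_4:(\zeta_1,w)\mapsto\bigl(\zeta_1+c_{12}(q),\,q\,u(q)\,w\bigr).
\]
Here $\lambda(q)=e^{2\pi ic_{21}(q)}$ and $u(q)=e^{2\pi ic_{22}(q)}$ are holomorphic and nonvanishing, with $\lambda(0)=1$ and $u(0)=e^{-2\pi}=\kappa$. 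The key point is that $t$ has disappeared: the action is single-valued in $q$. So $\mathcal T^*$ is the quotient of $\Delta^*_\epsilon\times\C_{\zeta_1}\times\C^*_w$ by this $\Z^3$-action, with no further deck quotient.

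\emph{Extending across $q=0$.} Let $\widetilde Y\to\Delta_\epsilon$ be the Tate chain with period $Q=q\,u(q)$. It is the union of charts $U_n\cong\C^2_{x_n,y_n}$ with $q=x_ny_n$, where $w=x_n Q^{\,n}$ on $q\ne0$ and consecutive charts are glued by $y_{n}=x_{n+1}^{-1}$ up to the unit $u$. The map $w\mapsto Qw$ is the chart shift $U_n\to U_{n+1}$. Multiplication by the unit $\lambda(q)$ extends as $x_n\mapsto\lambda x_n$, $y_n\mapsto\lambda^{-1}y_n$. Hence the $\Z^3$-action extends holomorphically to $\widehat{\mathcal T}=\C_{\zeta_1}\times\widetilde Y$, and I define $\overline{\mathcal T}=\widehat{\mathcal T}/\Z^3$.

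\emph{Freeness, properness, compactness of fibres.} Freeness and properness follow by splitting off the $v_4$-component. The $v_4$-exponent is detected by the chart index, or by $\log|w|/\log|Q|$ on $q\ne0$, and the Tate shift is free and proper. The subgroup generated by $e_1,v_3$ acts on $\zeta_1$ by translation by the lattice $\Z+c_{11}(q)\Z$, which is full since $\operatorname{Im}c_{11}$ is close to $1$. The proof of properness for this action then imitates Step 2(c) of \cref{prop:period-bridge}. Properness of $\pi$ comes from a compact fundamental domain over each closed subdisc: a closed parallelogram in $\zeta_1$, times $\overline U_0\cup\overline U_1$ intersected with $\{|x_n|,|y_n|\le1\}$ (rescaled).

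\emph{Critical locus and central fibre.} Since $q=x_ny_n$ and $\zeta_1$ is a free coordinate, the local form $q\circ\pi=xy$ holds in coordinates $(\zeta_1,x_n,y_n)$. The critical set is therefore $\{x_n=y_n=0\}$. Modulo the group, it is $\C_{\zeta_1}/(\Z+c_{11}(0)\Z)=E_i$; the action on $\zeta_1$ at $q=0$ is by $\Z+i\Z$ because $c_{12}(0)=0$. At $q=0$ we have $\lambda=1$ and $c_{12}=0$, so the action is a product. This gives central fibre $E_i\times(\text{chain of }\Pone\text{'s}/\text{shift})=E_i\times(\Pone/(0\sim\infty))$. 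Finally, the identification $\overline{\mathcal T}|_{\Delta^*_\epsilon}\cong\mathcal T^*$ is induced by $(q,\zeta_1,\zeta_2)\mapsto(q,\zeta_1,e^{2\pi i\zeta_2})$ composed with the standard open embedding $\C^*_w\hookrightarrow\widetilde Y_q$.

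I expect the main obstacle to be Hausdorffness and properness of the extended action near $q=0$. Here the Tate shift and the $q$-dependent $\zeta_1$-translations $c_{11}(q),c_{12}(q)$ and unit $\lambda(q)$ interact, and $\epsilon$ must be chosen small enough that these perturbations do not spoil the standard estimates on $|x_n|,|y_n|$. I would first prove properness for the pure Tate $\Z$-action on $\widetilde Y$. I would then treat $\overline{\mathcal T}$ as a quotient of the proper $\Z$-quotient by the residual $\Z^2$-action, which is a lattice action on $\zeta_1$ covering the identity on the base and is therefore proper by the argument of Step 2(c).
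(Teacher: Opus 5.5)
\textbf{Gap: properness of the extended $\Z^3$-action.} Your setup is sound and parallels the paper. Keeping $\zeta_1$ additive instead of using $z_1=e^{2\pi i\zeta_1}$, and using the chain of period $Q=qu(q)$ instead of absorbing $u_d$ into $\gamma_2$, are harmless changes. Your identification of the punctured part, the local form $q=xy$, and the central fibre are correct.

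The gap is the properness of the $\Z^3$-action on $\C_{\zeta_1}\times\widetilde Y$, uniformly as $q\to0$. This is the real content of the proposition, and your splitting argument for it does not work.
\begin{itemize}
\item[(i)] The claim that the $v_4$-exponent is detected by the chart index or by $\log|w|/\log|Q|$ is false. The generator $v_3$ multiplies $w$ by $\lambda(q)=e^{2\pi i c_{21}(q)}$, and $|\lambda(q)|\neq1$ in general: for small real $q>0$, $\operatorname{Im}c_{21}=-2q/(1+q^2)\neq0$. So $v_3^b$ shifts $\log|w|$ by $b\log|\lambda(q)|$, which is unbounded in $b$, and it does not preserve bounded chart representatives.
\item[(ii)] On the quotient by $\langle v_4\rangle$, the coordinate $\zeta_1$ is only defined modulo $c_{12}(q)\Z$, and $v_3$ still acts on the $\widetilde Y$ factor. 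So the residual $\Z^2$ is not a lattice action on $\zeta_1$ covering the identity. The Step~2(c) argument, which reads the lattice vector off continuously from $\zeta'-\zeta$, has nothing to apply to.
\item[(iii)] Properness of $\langle e_1,v_3\rangle$ and of $\langle v_4\rangle$ separately does not give properness of $\Z^3$. Compare $\Z^2$ acting on $\R$ by the translations $1$ and $\sqrt2$.
\end{itemize}
The coupling through $\lambda(q)$ and $c_{12}(q)$ is exactly what must be controlled. Freeness is fine, but for a different reason than you give: it follows from the full-lattice condition for $q\neq0$ and from the product action at $q=0$.

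\textbf{The missing idea: the paper's invariant height.} In your notation it is $H=\log|w|+2\pi\frac{\operatorname{Im}c_{21}}{\operatorname{Im}c_{11}}\operatorname{Im}\zeta_1$. It is invariant under $e_1$ and $v_3$. Under $v_4$ it shifts by
$L=\log|q|-2\pi\operatorname{Im}c_{22}+2\pi\frac{\operatorname{Im}c_{21}\operatorname{Im}c_{12}}{\operatorname{Im}c_{11}}$, with $|L|\ge|\log|q||$.
Represent the points of a compact set in charts of index $|k|\le N$ with coordinates bounded by $R$. Then $|H(p')-H(p)|\le(2N+1)|\log|q||+O(\log R)$.
\begin{itemize}
\item[(a)] The error grows at the same rate $|\log|q||$ as $|L|$, so this bounds the $v_4$-exponent uniformly for $0<|q|<\epsilon$.
\item[(b)] At $q=0$ the chain relation bounds that exponent directly.
\item[(c)] Only then does the $\operatorname{Im}\zeta_1$-equation, using $\operatorname{Im}c_{11}\ge\frac35$, bound the $v_3$-exponent.
\item[(d)] Finally the $\operatorname{Re}\zeta_1$-equation bounds the $e_1$-exponent.
\end{itemize}
With this estimate inserted, your argument goes through. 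Your compact-fundamental-domain proof that $\pi$ is proper is then a valid alternative to the paper's Step~6, which uses a relatively compact neighbourhood and connected fibres. In your approach, the real period coordinates $v\in[0,1]^4$ keep $|x_0|,|y_0|$ bounded uniformly in $q$, after rescaling so as to cover the whole $\Pone$ at $q=0$.
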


\begin{proof}
The proof proceeds in six steps, beginning with uniform estimates
that also apply to the deformation in \cref{sec:relative}.
We then express the regular family in multiplicative coordinates,
construct the node charts, and extend the period action to form
the quotient. The final two steps identify its central fibre and
critical curve and establish properness of the projection to the disc.

Write
\begin{equation*}
 Z(t)=\begin{pmatrix}a(q)&b(q)\\c(q)&t+d(q)\end{pmatrix},
 \quad
 a=d=\frac{i(1-q^2)}{1+q^2},\quad
 b=\frac{2iq}{1+q^2},\quad c=-b.
\end{equation*}
The functions \(a,b,c,d\) are holomorphic for \(|q|<1\), and
\(a(0)=d(0)=i\), \(b(0)=c(0)=0\).

\smallskip
\noindent\emph{Step 1. Uniform estimates.}
Fix, once and for all,
\begin{equation}\label{eq:single-disc-choice}
 0<\epsilon\le\frac14.
\end{equation}
For the parameter family of \cref{sec:relative},
temporarily let \(a,b,c,d\) denote the entries of
\(C(\lambda q)\), where \(|\lambda|\le2\) and \(|q|\le\epsilon\).
Set
\[
 u_j=e^{2\pi i j},\quad
 \ell_j=\log|u_j|=-2\pi\operatorname{Im}j,
 \quad j=a,b,c,d.
\]
Since \(|\lambda q|\le\frac12\),
\[
 |a|=|d|
 \le\frac{1+|\lambda q|^2}{1-|\lambda q|^2}
 \le\frac53,\quad
 |b|=|c|
 \le\frac{2|\lambda q|}{1-|\lambda q|^2}
 \le\frac43.
\]
In particular,
\[
 \operatorname{Im}a = \operatorname{Re} \frac{1-(\lambda q)^2}{1+(\lambda q)^2}
 =\frac{1-|\lambda q|^4}{|1+(\lambda q)^2|^2}
 \ge\frac{1-|\lambda q|^2}{1+|\lambda q|^2}
 \ge\frac35.
\]
With \(C_*:=10\pi/3\) and \(a_*:=6\pi/5\), these inequalities give
\begin{equation*}
\begin{gathered}
    |\ell_j|
 =2\pi|\operatorname{Im}j|
 \le2\pi|j|
 \le\frac{10\pi}{3}=C_*,
 \quad j=a,b,c,d,\\
 \ell_a\le -\frac{6\pi}{5}=-a_*<0
\end{gathered}
\end{equation*}
For \(q\ne0\), define
\begin{equation*}
 L=\log|q|+\ell_d-\frac{\ell_c}{\ell_a}\ell_b.
\end{equation*}
Since \(a=d\) and \(c=-b\),
\[
 L=\log|q|+\ell_a+\frac{\ell_b^2}{\ell_a}.
\]
Direct calculation yields
\begin{equation*}
 \begin{gathered}
 L=\log|q|
   -2\pi\frac{1+|\lambda q|^2}{1-|\lambda q|^2}
   <\log|q|<0,\\
 |L|\ge|\log|q||,\\
 |L-\log|q||\le C_*
 \end{gathered}
\end{equation*}
All these estimates are uniform in \(\lambda\).
For the rest of the proof, set \(\lambda=1\).

\smallskip
\noindent\emph{Step 2. Constructing the regular family from the periods.}
Let
\[
 H_\epsilon=\left\{t\in\C:
       \operatorname{Im}t>\frac{1}{2\pi}\log\frac1\epsilon\right\},
 \quad q=e^{2\pi it}.
\]
Then \(H_\epsilon/\langle S \rangle\cong\Delta_\epsilon^*\), where
\(S:t\mapsto t+1\). For $r=|q|<1$, direct calculation gives
\begin{equation}\label{eq:exact-normalized-det}
 \det\operatorname{Im}Z(t)
 =
 \frac{1}
      {|1+q^2|^2}\big((1+r^2)^2-\frac{\log|q|}{2\pi}(1-r^4)\big)>0.
\end{equation}
Consequently \((I_2\ Z(t)):\R^4\to\C^2\) is a real-linear
isomorphism and \(\widehat\Lambda_t=\Z^2+Z(t)\Z^2\)
is a full lattice, related to the original lattice by
\(\Lambda_t=A(q)\widehat\Lambda_t\).
The lattice action is
\[
 (n,m)\cdot(t,\zeta)
   =(t,\zeta+n+Z(t)m),
 \qquad (n,m)\in\Z^2\times\Z^2.
\]
\smallskip
\noindent\emph{(a) Holomorphicity.}
Since $Z(t)$ is holomorphic, each transformation is
holomorphic, with inverse given by $(-n,-m)$.

\smallskip
\noindent\emph{(b) Freeness.}
If $(n,m)\cdot(t,\zeta)=(t,\zeta)$, then
\[
 n+Z(t)m=0
 \quad\Longrightarrow\quad
 \operatorname{Im}Z(t)m=0
 \quad\Longrightarrow\quad m=0,\ n=0,
\]
because $\det\operatorname{Im}Z(t)>0$.

\smallskip
\noindent\emph{(c) Properness.}
As in the first quotient of
\cref{prop:period-bridge}, the action map is an embedding with closed image, hence proper.

Consequently, the first quotient is a complex threefold by
\cite[Theorem~1.16]{Lee2024}.

Since \(Z(t+1)=Z(t)+B\) and \(B\) is integral,
\(\widehat\Lambda_{t+1}=\widehat\Lambda_t\) and
\[
 (t,[\zeta]_{\widehat\Lambda_t})\longmapsto
       (t+1,[\zeta]_{\widehat\Lambda_{t+1}})
\]
is a well-defined biholomorphism of the first quotient. As in
\cref{prop:period-bridge}, the integer powers act freely
and their action map is an embedding with closed image, hence proper.
Taking this second quotient gives a holomorphic family \(\mathcal T^*\to\Delta_\epsilon^*\).
The real coordinates \(\zeta=(I_2\ Z(t))v\) give smooth local
trivializations with compact fibre \(\R^4/\Z^4\), so the
projection is proper. The resulting two-stage quotient is
\[
 H_\epsilon\times\C^2
 \xrightarrow{(I_2 \ Z(t))\Z^4} (H_\epsilon\times \C^2)/\Z^4
 \xrightarrow{\langle S \rangle} \mathcal T^*.
\]

For the filling at \(0\), we first quotient by the two unit periods
to obtain multiplicative coordinates.
Let \(T_i\) denote translation by the \(i\)-th normalized period,
and let \(S(t,\zeta)=(t+1,\zeta)\). All \(T_i\) commute, and
\begin{equation*}
 ST_i=T_iS\ (i=1,2,3),\quad
 ST_4S^{-1}=T_2^{-1}T_4.
\end{equation*}
Thus after quotienting by the unit periods \(\langle T_1,T_2\rangle\),
the remaining actions commute and \(S\) may be taken before \(T_3,T_4\):
\[
 H_\epsilon\times\C^2
 \xrightarrow{\langle T_1,T_2 \rangle} H_\epsilon\times(\C^*)^2
 \xrightarrow{\langle S \rangle} \mathcal U^*:=(\C^*)^2\times\Delta_\epsilon^*.
\]
We distinguish additive coordinates \(\zeta_j\) from their
multiplicative counterparts
\[
 z_j=e^{2\pi i\zeta_j}\in\C^*,\qquad
 q=e^{2\pi it}\in\Delta_\epsilon^*.
\]
The two remaining generators are
\begin{equation}\label{eq:multiplicative-decks}
 \gamma_1(z_1,z_2)=(u_a z_1,u_c z_2),\quad
 \gamma_2(z_1,z_2)=(u_b z_1,q u_d z_2).
\end{equation}
The transformations \(\gamma_1,\gamma_2\) fix \(q\) and commute.
Thus
\begin{equation*}
 \mathcal T^*=\mathcal U^*/\Gamma,\quad
 \Gamma=\langle\gamma_1,\gamma_2\rangle\cong\Z^2.
\end{equation*}
For a local choice of \(t\), the identification of families is
\[
 \C^2/\widehat\Lambda_t\longrightarrow(\C^*)^2/\Gamma_q,
 \qquad[\zeta_1,\zeta_2]\longmapsto
       [e^{2\pi i\zeta_1},e^{2\pi i\zeta_2}].
\]

\smallskip
\noindent\emph{Step 3. Construction of the complex manifold \(\mathcal V\).}
Set \(\Delta=\Delta_\epsilon\), with \(\epsilon\) fixed in
\eqref{eq:single-disc-choice}. We construct the complex manifold
\(\mathcal V\) and identify its restriction over \(\Delta^*\)
with \(\mathcal U^*\).

\smallskip
\noindent\emph{(a) Node charts and transition maps.}
For each \(k\in\Z\), define
\begin{equation*}
 V_k=\C^*_{z_1}\times
 \{(x_k,y_k,q):x_ky_k=q,\ q\in\Delta\}.
\end{equation*}
The coordinates \((z_1,x_k,y_k)\) identify \(V_k\) with the
open subset \(\{|x_ky_k|<\epsilon\}\) of
\(\C^*\times\C^2\). In particular, \(V_k\) is a complex
threefold, including at \(x_k=y_k=0\).
Define a transition from \(\{x_k\ne0\}\subset V_k\) to
\(\{y_{k+1}\ne0\}\subset V_{k+1}\) by
\begin{equation}\label{eq:adjacent-node-chart}
 x_{k+1}=x_k^2y_k=q x_k,\qquad y_{k+1}=x_k^{-1},
\end{equation}
leaving \(z_1,q\) unchanged. It preserves \(x_{k+1}y_{k+1}=q\)
and has holomorphic inverse
\[
 x_k=y_{k+1}^{-1},\qquad
 y_k=q y_{k+1}=x_{k+1}y_{k+1}^2.
\]

\smallskip
\noindent\emph{(b) The quotient is Hausdorff.}
Let
\[
 \mathcal W=\coprod_{k\in\Z}V_k,
 \qquad Q:\mathcal W\longrightarrow\mathcal V=\mathcal W/\sim,
\]
where \(\sim\) is generated by \eqref{eq:adjacent-node-chart}.
For every open \(O\subset\mathcal W\), the set \(Q^{-1}(Q(O))\)
is a union of images of open subsets of \(O\) under finite
compositions of the transition maps and their inverses.
These images are open, so \(Q(O)\) is open by the quotient topology.
Thus \(Q\) is open.
For \(h\ge1\), points in \(V_k\) and \(V_{k+h}\) are equivalent exactly when
\begin{equation}\label{eq:plumbing-closed-relation}
 z_1=z'_1,\quad
 x_{k+h}=q^h x_k,\quad
 y_k=q^h y_{k+h},\quad
 q^{h-1}x_k y_{k+h}=1.
\end{equation}
These equations and the diagonal relations
are closed on each \(V_k\times V_\ell\), so the equivalence relation
\(R:= \{(p,p') : p \sim p'  \}\subset\mathcal W\times\mathcal W\) is closed.
Then \(\mathcal V\) is Hausdorff by
\cite[Proposition~3.57, p.~68]{Lee2011}.

\smallskip
\noindent\emph{(c) The complex structure and projection.}
Because each \(Q|_{V_k}\) is a homeomorphism onto an open subset, the images of countable bases of the \(V_k\) together form
a countable basis of \(\mathcal V\).
The coordinates \((z_1,x_k,y_k)_k\) on each open manifold \(V_k\) and the biholomorphic
transition maps together determine a complex structure on \(\mathcal V\)
\cite[Proposition~1.1(a)]{Lee2024}. The functions \(x_ky_k\)
agree on overlaps and hence define a holomorphic map
\[
 \widetilde\pi:\mathcal V\longrightarrow\Delta,\qquad
 [(z_1,x_k,y_k,q)_k]\longmapsto q.
\]

\smallskip
\noindent\emph{(d) Identification of the punctured part.}
For \(q\ne0\),
\[
 F:\mathcal V|_{\Delta^*}\longrightarrow\mathcal U^*,\qquad
 [(z_1,x_k,y_k,q)_k]\longmapsto(z_1,q^k y_k,q)
\]
is a well-defined holomorphic map. Define
\begin{equation*}
 \iota:\mathcal U^*\longrightarrow\mathcal V,\qquad
 (z_1,z_2,q)\longmapsto[(z_1,q/z_2,z_2,q)_0].
\end{equation*}
The map \(\iota\) is holomorphic into the chart \(V_0\).
Its representative in \(V_k\) is
\[
 (z_1,x_k,y_k,q)
   =(z_1,q^{k+1}/z_2,q^{-k}z_2,q).
\]
Thus \(F\) and \(\iota\) are holomorphic inverses of each other,
and \(\iota\) is an embedding with
\(\iota(\mathcal U^*)=\mathcal V|_{\Delta^*}\).

\smallskip
\Needspace{5\baselineskip}
\noindent\emph{Step 4. Extending the period action and forming the quotient.}

\smallskip
\noindent\emph{(a) Holomorphicity.}
Under the identification in Step~3(d), the transformations in
\eqref{eq:multiplicative-decks} take the form
\begin{align*}
 \gamma_1:V_k&\longrightarrow V_k,&
 (z_1,x_k,y_k)&\longmapsto
 (u_a z_1,u_c^{-1}x_k,u_c y_k),\\
 \gamma_2:V_k&\longrightarrow V_{k+1},&
 (z_1,x_k,y_k)&\longmapsto
 (u_b z_1,u_d^{-1}x_k,u_d y_k).
\end{align*}
These formulas are holomorphic at \(q=0\), have holomorphic
inverses, and commute with the chart transitions. Moreover,
\(\gamma_1\gamma_2=\gamma_2\gamma_1\). They therefore define
a holomorphic action of \(\Gamma\) on \(\mathcal V\).
Explicitly,
\[
 \gamma_1^m\gamma_2^n[(z_1,x_k,y_k,q)_k]
 =[(u_a^m u_b^n z_1,u_c^{-m}u_d^{-n}x_k,
                         u_c^m u_d^n y_k,q)_{k+n}].
\]
On the regular part,
\[
 F(\gamma_1p)=(u_a z_1,u_c z_2,q),\quad
 F(\gamma_2p)=(u_b z_1,q u_d z_2,q),\quad z_2=q^ky_k.
\]
Thus \(F\) and its inverse \(\iota\) are \(\Gamma\)-equivariant.

\smallskip
\noindent\emph{(b) Logarithmic identities.}
On \(\mathcal V|_{\Delta^*}\), put
\[
 H=\log|z_2|-\frac{\ell_c}{\ell_a}\log|z_1|.
\]
For \(\gamma=\gamma_1^m\gamma_2^n\),
\begin{equation}\label{eq:filling-action-logs}
 \begin{aligned}
 H(\gamma p)&=H(p)+nL,\\
 \log|z_1(\gamma p)|
 &=\log|z_1(p)|+m\ell_a+n\ell_b.
 \end{aligned}
\end{equation}

\smallskip
\noindent\emph{(c) Properness and freeness.}
Consider the action map
\[
 \alpha:\Gamma\times\mathcal V\longrightarrow\mathcal V^2,
 \qquad
 (\gamma,p)\longmapsto(\gamma p,p).
\]
Let \(K\subset\mathcal V \times \mathcal V\) be compact, and put
\[
 A=\operatorname{pr}_1(K)\cup\operatorname{pr}_2(K),
 \quad
 F_A=\{\gamma\in\Gamma:\gamma A\cap A\ne\varnothing\}.
\]
We will show that \(F_A\) is finite. Then
\(\alpha^{-1}(K)\) is a closed subset of the compact
set \(F_A\times A \subset \Gamma \times \mathcal V\) and \(\alpha\) is proper.

Since \(K\) is compact, so is \(A\). Cover \(A\) by finitely
many open neighbourhoods with compact closures contained in
node charts \(V_k\). The coordinate functions
\(z_1,z_1^{-1},x_k,y_k\) are bounded on these closures.
Hence there exist \(N\ge0\) and \(R>1\) such that every point
of \(A\) has a representative satisfying
\[
 |k|\le N,\quad
 R^{-1}\le|z_1|\le R,\quad
 |x_k|,|y_k|\le R.
\]
Suppose
\[
 p'=\gamma_1^m\gamma_2^n p,\qquad p,p'\in A,
\]
and choose such representatives in \(V_k,V_l\),
respectively. Write \(q=q(p)=q(p')\).

If \(q\ne0\),we have
\[
 \log|q|-\log R
 \le\log|y_k|, \quad\log|y_l'|\le\log R.
\]
Thus,
\[
|\log|y_l'|-\log|y_k||
\le|\log|q||+2\log R.
\]
Using \(H(p')-H(p)=nL\), we obtain
\[
 nL
 =(l-k)\log|q|
 +\log|y_l'|-\log|y_k|
 -\frac{\ell_c}{\ell_a}\log|z_1'/z_1|.
\]
Consequently,
\[
 |nL|
 \le(2N+1)|\log|q||
   +2(1+C_*/a_*)\log R.
\]
Since \(|L|\ge|\log|q||\) and \(0<|q|<\epsilon<1\),
\[
 |n|
 \le
 2N+1+
 \frac{2(1+C_*/a_*)\log R}{\log(1/\epsilon)}.
\]
If \(q=0\), the representative of
\(\gamma_1^m\gamma_2^n p\) in \(V_{k+n}\) is equivalent
to that of \(p'\) in \(V_l\). Hence
\eqref{eq:plumbing-closed-relation} gives
\[
 |k+n-l|\le1,
 \qquad\text{so}\qquad |n|\le2N+1.
\]
Thus \(n\) is uniformly bounded in both cases.

For every \(q\), we have
\[
 m\ell_a+n\ell_b=\log|z_1'/z_1|.
\]
Using \(|\ell_a|\ge a_*\) and \(|\ell_b|\le C_*\),
we conclude that
\[
 |m|\le\frac{2\log R+C_*|n|}{a_*}.
\]
Therefore only finitely many integer pairs \((m,n)\)
can occur, so \(F_A\) is finite and the action is proper.
For each \(p\in\mathcal V\),
\[
 \alpha^{-1}(\{(p,p)\})=\Gamma_p\times\{p\},
\]
where \(\Gamma_p=\{\gamma\in\Gamma:\gamma p=p\}\) is the stabilizer of the action.
Since \(\alpha\) is proper, \(\Gamma_p\) must be compact, so finite.
\(\Gamma\cong\Z^2\) then implies \(\Gamma_p\) is trivial.
Thus the action is also free.

The Holomorphic Quotient Manifold Theorem
\cite[Theorem~1.16]{Lee2024} now gives a Hausdorff complex
threefold \(\overline{\mathcal T}:=\mathcal V/\Gamma\) and a holomorphic covering
\[
 \rho:\mathcal V\longrightarrow
 \overline{\mathcal T}=\mathcal V/\Gamma.
\]
Since \(q\) is \(\Gamma\)-invariant, it descends to a holomorphic map
\(\pi:\overline{\mathcal T}\to\Delta\).
The equivariant biholomorphism \(\iota\) induces a holomorphic
embedding over \(\Delta^*\):
\[
 j:\mathcal T^*=\mathcal U^*/\Gamma\longrightarrow
 \overline{\mathcal T}=\mathcal V/\Gamma,
 \quad j(\mathcal T^*)=\overline{\mathcal T}|_{\Delta^*}.
\]

\smallskip
\noindent\emph{Step 5. The central fibre and critical locus.}
In each chart,
\[
 \widetilde\pi^{-1}(0)\cap V_k
 =
 \C^*_{z_1}\times
 \bigl(\{x_k=0\}\cup\{y_k=0\}\bigr).
\]
The gluing at \(q=0\) is
\[
 (x_k,0)_k\sim(0,x_k^{-1})_{k+1},
 \qquad x_k\ne0,
\]
with \(z_1\) unchanged. Thus the branch \(\{y_k= 0\} \subset V_k\)
and the branch \(\{x_{k+1} = 0\} \subset V_{k+1}\) form a copy of
\(\C^*\times\Pone\), giving
\[
 \widetilde\pi^{-1}(0)
 \cong
 \C^*\times
 \bigl(\cdots\cup\Pone\cup\Pone\cup\Pone\cup\cdots\bigr),
\]
where adjacent spheres meet at their marked points
\(0\) and \(\infty\).

At \(q=0\), the coefficients satisfy
\[
 u_a=u_d=\kappa=e^{-2\pi},
 \qquad u_b=u_c=1.
\]
Hence the two generators act by
\[
 \begin{aligned}
 \gamma_1(z_1,x_k,y_k)_k
   &=(\kappa z_1,x_k,y_k)_k,\\
 \gamma_2(z_1,x_k,y_k)_k
   &=(z_1,\kappa^{-1}x_k,\kappa y_k)_{k+1}.
 \end{aligned}
\]
Taking the quotient by \(\gamma_1\) replaces the \(\C^*\) factor by
\[
 \C^*/\kappa^\Z
 \cong E_i=\C/(\Z+i\Z).
\]
The remaining generator \(\gamma_2\) identifies each sphere with the
next one. Hence
\[
 F_0:=\pi^{-1}(0)
 \cong
 \frac{E_i\times\Pone}
      {(p,0)\sim(p,\infty)}
 =
 E_i\times\bigl(\Pone/(0\sim\infty)\bigr).
\]
In particular, \(F_0\) is compact.

Since \(\rho\) is locally biholomorphic, the map \(\pi\) has
the same local expression \(q=xy\). Therefore
\[
 d(xy)=y\,dx+x\,dy,
 \quad
 \operatorname{Crit}(\pi)
 =
 \rho\!\left(\bigcup_{k\in\Z}\{x_k=y_k=0\}\right)
 \cong E_i.
\]
Here \(\gamma_2\) identifies all nodes of the chain.

\smallskip
\noindent\emph{Step 6. Properness over the disc.}
Properness of the group action gives the Hausdorff quotient, but
does not by itself prove properness of \(\pi\). We prove the
latter using the compact central fibre and the connected regular
fibres.
The compact central fibre has a relatively compact open
neighbourhood in \(\overline{\mathcal T}\):
\[
 F_0\subset\Omega\Subset\overline{\mathcal T}.
\]
The map \(\pi\) is open. At a regular point this follows from the
submersion theorem; at a critical point the local expression
\((z,x,y)\mapsto xy\) maps every sufficiently small product
neighbourhood onto a neighbourhood of \(0\).
Moreover, \(\partial\Omega\) is compact and disjoint from \(F_0\).
Thus \(\pi(\partial\Omega)\) is closed and does not contain \(0\),
whereas \(\pi(\Omega)\) is an open neighbourhood of \(0\).
Consequently, there is a \(0<\delta<\epsilon\) satisfying
\[
 \overline\Delta_\delta
 \subset
 \pi(\Omega)\setminus\pi(\partial\Omega).
\]
For \(0<|q|\le\delta\), the set \(\Omega\cap\pi^{-1}(q)\)
is nonempty and open in the connected torus \(\pi^{-1}(q)\).
It is also closed there, because
\(\pi^{-1}(q)\cap\partial\Omega=\varnothing\).
Hence every such fibre lies in \(\Omega\).
Together with \(F_0\subset\Omega\), we have
\[
 \pi^{-1}(\overline\Delta_\delta)\subset\Omega.
\]
\(\pi^{-1}(\overline\Delta_\delta)\) is closed in \(\overline{\mathcal T}\)
and contained in the compact set \(\overline\Omega\), so it is compact.
Thus \(\pi\) is proper near \(0\).
Together with properness over \(\Delta_\epsilon^*\), and the
locality of properness on the base, this proves properness over
\(\Delta_\epsilon\).
The same compact-neighbourhood argument applies to a continuous
open map between manifolds of any dimensions, provided the fibre
under consideration is nonempty and compact and nearby fibres
are connected.

The proof is now complete. 
\end{proof}

\begin{remark}\label[remark]{rem:infinity-filling}
\label[remark]{rem:marking-not-map}
We apply the same construction at \(\infty\). Replacing the period
matrix \((I_2\ Z(t))\) by \((I_2\ {-Z(t)})\)
does not change its lattice. Set \(w=q^{-1}\) and \(s=-t\),
so that \(w=e^{2\pi is}\). The last two columns then become
\begin{equation*}
 Z_\infty(s):=-Z(-s)=sB+C(-w).
\end{equation*}
\begin{equation*}
 \begin{gathered}
 C(-w)=-C(w^{-1})
 =\frac{i}{1+w^2}
       \begin{pmatrix}1-w^2&-2w\\2w&1-w^2\end{pmatrix}.
 \end{gathered}
\end{equation*}
With \((t,q,C(q))\) replaced by \((s,w,C(-w))\), the determinant
\eqref{eq:exact-normalized-det} and the uniform estimates apply with
\(\lambda=-1\). The same radius, charts, and action argument therefore
give the filling at \(\infty\)
\[
 \pi_\infty:\overline{\mathcal T}_\infty\longrightarrow
 \Delta_\epsilon,
 \qquad \overline{\mathcal T}_\infty|_{\Delta_\epsilon^*}
       \cong\mathcal T_\infty^*,
\]
with periods \((I_2\ Z_\infty(s))\).
\end{remark}

This construction is related to the logarithmic description of
degenerating complex tori in
\cite{KajiwaraKatoNakayama2008}; the Tate model in Section~1.1.3
of that paper has the same chain of rational curves.
The proof above uses ordinary holomorphic charts and estimates
for the discrete action. It does not invoke their proper-model
theorem or require a polarization.

\subsection{Comparison with the original torus family}
\label{subsec:period-comparison}

Reversing the coordinate normalization identifies the punctured
fillings with the corresponding restrictions of \(\mathcal G\).
The change of integral marking must be recorded in this comparison,
since it enters the attaching map in \cref{sec:recognition}.

With the radius \(\epsilon\) chosen above, the local discs and
base charts are
\[
 \begin{aligned}
 \Delta_0&=\{|q|<\epsilon\},
 &U_0&=\Pone\setminus\{\infty\},\\
 \Delta_\infty&=\{|w|<\epsilon\},
 &U_\infty&=\Pone\setminus\{0\}.
 \end{aligned}
\]
Write \(O_j\subset U_j\) for the image of \(\Delta_j^*\)
under the corresponding base identification:
\begin{equation*}
 \begin{aligned}
 \Delta_0^*&\longrightarrow
 O_0=\{0<|q|<\epsilon\},
 &q&\longmapsto q,\\
 \Delta_\infty^*&\longrightarrow
 O_\infty=\{|q|>\epsilon^{-1}\},
 &w&\longmapsto q=w^{-1}.
 \end{aligned}
\end{equation*}
The two fillings constructed above are
\[
 \pi_j:\overline{\mathcal T}_j\longrightarrow\Delta_j,
 \qquad j=0,\infty.
\]

\begin{corollary}
\label[corollary]{cor:both-fillable}
The horizontal maps in the following commutative diagram are
holomorphic isomorphisms:
\begin{equation}\label{eq:comparison-composite}
 \begin{tikzcd}[column sep=large,row sep=large]
 \overline{\mathcal T}_j|_{\Delta_j^*}
   \arrow[r,"\cong"]
   \arrow[d,"\pi_j"']
 &
 \mathcal T_j^*
   \arrow[r,"{\substack{[\zeta]\mapsto[A(q)\zeta]\\ [\zeta]\mapsto[A(w^{-1})\zeta]}}"',"\cong"]
   \arrow[d]
 &
 \mathcal G|_{O_j}
   \arrow[d,"p"]
 \\
 \Delta_j^*
   \arrow[r,equal]
 &
 \Delta_j^*
   \arrow[r,
     "{\substack{q\mapsto q\\
                  w\mapsto w^{-1}}}"']
 &
 O_j .
 \end{tikzcd}
\end{equation}
Here \(\zeta\in\C^2\), and each equivalence class is taken with
respect to the period lattice of the indicated fibre.
Gluing along these identifications gives proper holomorphic maps
\[
 \overline p_j:\overline{\mathcal G}_j\longrightarrow U_j
\]
from complex threefolds, with the singular fibres
and local models of \cref{prop:rank-one-fill}.
\end{corollary}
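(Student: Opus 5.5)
The proof has three parts: show the two horizontal maps in \eqref{eq:comparison-composite} are fibrewise well defined, show they are holomorphic isomorphisms of families, and glue.

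\emph{The first arrow} \(\overline{\mathcal T}_j|_{\Delta_j^*}\cong\mathcal T_j^*\) needs no new work. For \(j=0\) it is the inverse of the embedding \(j\) constructed at the end of Step~4 of \cref{prop:rank-one-fill}. For \(j=\infty\) it is the analogous map from \cref{rem:infinity-filling}.

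\emph{The second arrow at \(0\).} Work on the logarithmic cover \(H_\epsilon\times\C^2\) and use the map \((t,\zeta)\mapsto(t,A(q)\zeta)\). Since \(|q|<\epsilon\le\frac14\), we have \(\det A(q)=1+q^2\ne0\), so this is a biholomorphism of \(H_\epsilon\times\C^2\) onto its image over \(H_\epsilon\). The identity \(A(q)(I_2\ Z(t))=P(t)\) from \cref{subsec:local-completion} gives \(A(q)\widehat\Lambda_t=\Lambda_t\). More precisely, the map intertwines translation by the \(k\)-th column of \((I_2\ Z(t))\) with translation by the \(k\)-th column of \(P(t)\). It therefore descends to a fibrewise linear biholomorphism of the \(\Z^4\)-quotients. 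Because \(A\) depends only on \(q\), the map commutes with \(S:t\mapsto t+1\) on both sides: on the \(\mathcal T^*\) side \(S\) is compatible since \(Z(t+1)=Z(t)+B\), and on the \(\mathcal G\) side via \eqref{eq:equivariance}. Hence it descends once more to a holomorphic bijection \(\mathcal T_0^*\to\mathcal G|_{O_0}\) over the identity of \(\Delta_0^*=O_0\). Its inverse is induced in the same way by \(A(q)^{-1}\), so it is a biholomorphism. The marking is preserved, because columns map to columns in the same order.

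\emph{The second arrow at \(\infty\).} Set \(s=-t\) and \(w=q^{-1}\), and map \((s,\zeta)\mapsto(-s,A(w^{-1})\zeta)\). We need the lattice identity
\[
 A(q)\bigl(\Z^2+Z_\infty(s)\Z^2\bigr)=A(q)\bigl(\Z^2-Z(t)\Z^2\bigr)=A(q)\widehat\Lambda_t=\Lambda_t .
\]
Here \(A(q)\) with \(|q|>\epsilon^{-1}\) is invertible, because \(q\ne\pm i\). We also use that \(\Z^2-Z\Z^2=\Z^2+Z\Z^2\). Under this map the period columns are sent to \(P(t)G_0\), where \(G_0\) is the re-marking \eqref{eq:G0}. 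This is exactly how \(G_0\) enters the marking, consistent with \(G_0MG_0^{-1}=M^{-1}\). For equivariance, the generator \(s\mapsto s+1\) corresponds to \(t\mapsto t-1\). On the \(\mathcal G\) side this is compatible because \(P(t-1)=P(t)M^{-1}\), which by the conjugation identity becomes \(P(t)G_0\cdot M\cdot G_0^{-1}\) in the new marking. Since \(q\mapsto w^{-1}\) is biholomorphic, the argument of the \(0\) case then applies verbatim.

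\emph{Gluing.} Set \(\overline{\mathcal G}_0=\overline{\mathcal T}_0\cup_{\Phi}\mathcal G|_{U_0\setminus\{0\}}\), glued along the open sets over \(\Delta_0^*\cong O_0\) by the composite isomorphism \(\Phi\), and define \(\overline{\mathcal G}_\infty\) similarly. The projections agree on the overlap, so \(\overline p_j\) is well defined and holomorphic. The main obstacle is showing the glued space is Hausdorff; I would derive it from properness. Take points \(x\) in \(\overline{\mathcal T}_j\) and \(y\) in \(\mathcal G\) that are not identified. If their images in \(U_j\) differ, they are separated by preimages of disjoint base neighbourhoods. If their images agree, then both lie over \(O_j\), so \(x\) belongs to the overlap, and they are separated inside \(\mathcal G|_{O_j}\). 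Properness of \(\overline p_j\) is local on the base. It follows from the properness of \(\pi_j\) proved in Step~6 and of \(p\) proved in \cref{prop:period-bridge}. The singular fibres and the local models \(q=xy\) are unchanged, because the gluing takes place only over \(\Delta_j^*\).
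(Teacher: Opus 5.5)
Your proposal is correct and follows essentially the same route as the paper. Both use the period identities \(P(t)=A(q)(I_2\ Z(t))\) and \(P(-s)G_0=A(w^{-1})(I_2\ Z_\infty(s))\) together with the invertibility of \(A\) on the punctured discs, descend the induced maps through both quotients (with real period change \(v\mapsto v\), respectively \(v\mapsto G_0v\)), and then glue, with Hausdorffness from separation over the base and properness local on the base. The only slip is that your Hausdorff argument neither uses nor needs properness, despite your remark that you would derive it from properness; it is the base-separation argument used in the paper.
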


\begin{proof}
The filling construction supplies the first horizontal
identification. For the second, the logarithmic coordinates at
the respective ends satisfy
\[
 q=e^{2\pi it}\quad\text{near }0,\quad
 w=e^{2\pi is},\quad q=w^{-1}=e^{-2\pi is}
 \quad\text{near }\infty.
\]
The period identities are
\begin{align}
 P(t)&=A(q)(I_2\ Z(t)),
 \label{eq:zero-period-comparison}\\
 P(-s)G_0&=A(w^{-1})(I_2\ Z_\infty(s)).
 \label{eq:infinity-period-comparison}
\end{align}
Together with \(G_0\Z^4=\Z^4\), these identities show that
\(A(q)\) and \(A(w^{-1})\) identify the corresponding period lattices.
On their respective punctured discs these matrices are
single-valued, holomorphic, and invertible, since
\[
 \det A(q)=1+q^2\ne0\quad\text{on }\Delta_0^*,
 \qquad
 \det A(w^{-1})=1+w^{-2}\ne0\quad\text{on }\Delta_\infty^*.
\]
They therefore induce the biholomorphisms in
\eqref{eq:comparison-composite}, with inverses induced by
the inverse matrices over the inverse base maps.

It follows from equations
\eqref{eq:zero-period-comparison}--%
\eqref{eq:infinity-period-comparison}
that the changes of real period coordinates are:
\[
 v\longmapsto v\quad(j=0),\qquad
 v\longmapsto G_0v\quad(j=\infty).
\]

The diagram \eqref{eq:comparison-composite} determines
identification relations, denoted by \(\sim\), on the total
spaces and on their bases. For two copies
\(\mathcal G^{(0)},\mathcal G^{(\infty)}\) of the original family, the
\emph{unshifted} completions are
\begin{equation}\label{eq:one-end-completions}
 \overline{\mathcal G}_j
 =
 \bigl(\overline{\mathcal T}_j
       \sqcup\mathcal G^{(j)}\bigr)/{\sim},
 \quad
 \overline p_j:\overline{\mathcal G}_j
 \longrightarrow
 \bigl(\Delta_j\sqcup\C_q^*\bigr)/{\sim}=U_j.
\end{equation}
Under the induced inclusions of the base charts into \(U_j\),
\[
 \begin{aligned}
 \overline p_j^{-1}(\Delta_j)
 &\cong\overline{\mathcal T}_j,\\
 \overline p_j^{-1}(\C_q^*)
 &\cong\mathcal G.
 \end{aligned}
\]
Points over distinct base points are separated by the inverse images of disjoint open neighbourhoods in the base, 
while points over the same base point lie in a common open
manifold. Thus \(\overline{\mathcal G}_j\) is Hausdorff.
The countable holomorphic atlases of the two pieces are compatible
under the gluing biholomorphism, so they define a complex threefold.
Properness follows from that of \(p\) and \(\pi_j\), since it
is local on the base.

The local identification over \(\Delta_j\) preserves the added
fibre and critical curve:
\[
 \overline p_0^{-1}(0)
 \cong
 \overline p_\infty^{-1}(\infty)
 \cong
 E_i\times\bigl(\Pone/(0\sim\infty)\bigr),
 \qquad
 \operatorname{Crit}(\overline p_j)\cong E_i.
\]
Along the critical curves, the local equations remain
\[
 q\circ\overline p_0=xy,
 \qquad
 w\circ\overline p_\infty=xy.
\]

\end{proof}

We remark that the canonical open embeddings
\[
 \jmath_j:\mathcal G\hookrightarrow\overline{\mathcal G}_j,
 \qquad
 \jmath_j(\mathcal G)=\overline p_j^{-1}(\C_q^*)
\]
identify \(\mathcal G\) with the regular part of each completion.

\section{Relative smooth splitting of the filled ends}
\label{sec:relative}

The preceding section gives the two ends as complex threefolds.
Determining the smooth manifold obtained by gluing them requires
product identifications that also specify the boundary coordinates.
To do it, we deform the periods to a product family and construct a flow
preserving the base coordinate. Near the cutting circle, the flow
keeps the full real period coordinates fixed. This induces the
diffeomorphisms \(D_0,D_\infty\) used to calculate the attaching map
in \cref{sec:recognition}. These identifications are smooth, not necessarily holomorphic.

Choose the compact base discs 
\[
 K_0=\{|q|\le1\},\quad
 K_\infty=\{|w|\le1\}
\]
for \(\overline{\mathcal G}_0\) and
\(\overline{\mathcal G}_\infty\), respectively. Write
\(S=\partial K_0\cap\partial K_\infty=\{|q|=1\}\)
for their common boundary, identifying the two boundary circles
by \(q\mapsto w=q^{-1}\).
In the following, the Tate blocks \(N_0\to K_0\) and
\(N_\infty\to K_\infty\) are taken over the whole closed
discs, extending the local models of
\cref{subsec:node-topology} by their regular Tate
families on the outer annuli.

\subsection{Deformation with fixed boundary coordinates}
\label{subsec:relative-deformation}

We begin with the end at \(0\). The deformation below replaces
\(P_{\mathrm{tw}}(q)\) by \(P_{\mathrm{tw}}(\lambda q)\), leaving
the monodromy unchanged. At \(\lambda=0\), the family is the product of the fixed complex torus \(E_i\), of complex dimension \(1\), and the Tate family.

\begin{proposition}\label[proposition]{prop:parametric}
The period matrices
\begin{equation*}
 P_\lambda(t)
 =P_{\mathrm{tw}}(\lambda q)
   \begin{pmatrix}I_2&tB\\0&I_2\end{pmatrix},
 \quad q=e^{2\pi it},\quad 0\le\lambda\le1,
\end{equation*}
admit simultaneous compact completions
\(\mathcal B_\lambda\to K_0\), with
\[
 \mathcal B_1=\overline{\mathcal G}_0|_{K_0},
 \qquad
 \mathcal B_0\cong E_i\times N_0,
 \qquad E_i=\C/(\Z+i\Z).
\]
Here \(N_0\) is the positive Tate block with multiplier
\(\kappa q\), where \(\kappa=e^{-2\pi}\).

There is a base-preserving diffeomorphism
\[
 H:\mathcal B_0\xrightarrow{\;\cong\;}\mathcal B_1
\]
which extends across \(S\).
On an open annulus about \(S\), it satisfies
\[
 H(q,[P_0(t)v])=(q,[P_1(t)v]),
 \qquad v\in\R^4/\Z^4.
\]
\end{proposition}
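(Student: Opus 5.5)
The plan is to treat \(\lambda\in[0,1]\) as an extra parameter and run the construction of \cref{prop:rank-one-fill} uniformly in \(\lambda\). I will produce a single smooth manifold \(\mathcal B\) with a proper submersion onto \([0,1]_\lambda\). Each slice over \(\lambda\) will be the completion \(\mathcal B_\lambda\to K_0\). The diffeomorphism \(H\) will then come from integrating a vector field that lifts \(\partial_\lambda\), preserves \(q\), and equals the prescribed "constant real period" field near \(S\).

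\emph{Step 1 (uniform periods and estimates).} First normalize \(P_\lambda(t)=A(\lambda q)(I_2\ Z_\lambda(t))\) with \(Z_\lambda(t)=tB+C(\lambda q)\), exactly as in \eqref{eq:zero-asymptotic}. On \(|q|\le1\), \(\lambda\le1\), the matrix \(A(\lambda q)\) can degenerate at \(\lambda q=\pm i\). So near the boundary I will work with \(P_\lambda\) directly, and use the normalized form only on \(|q|<\epsilon\). The positivity of \(\det L_{P_\lambda(t)}\) follows from \eqref{eq:global-period-det} with \(r\) replaced by \(|\lambda q|\) in the twistor part. This needs a brief recomputation; the key point is that the \(\log|q|\) term still appears with the correct sign.

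\emph{Step 2 (the filling).} The Step 1 estimates of \cref{prop:rank-one-fill} were stated for all \(|\lambda|\le 2\) precisely for this purpose. Hence the node charts \(V_k\times[0,1]\), the extended action of \(\gamma_1,\gamma_2\) (now with \(u_j\) depending smoothly on \(\lambda\)), the properness bound on \((m,n)\), and the compact-neighbourhood properness argument all go through with \(\lambda\) as a smooth parameter. This gives a smooth total space \(\mathcal B\) over \(\Delta_\epsilon\times[0,1]\). On \(\epsilon/2\le|q|\le1\) I glue in the regular family built from \(P_\lambda\) as in \cref{prop:period-bridge}, using \(A(\lambda q)\) on the overlap. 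At \(\lambda=1\) this is literally \(\overline{\mathcal G}_0|_{K_0}\).

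At \(\lambda=0\) we have \(P_0(t)=\begin{pmatrix}1&0&i&0\\0&1&0&t+i\end{pmatrix}\), so the fibre splits as \(\C/(\Z+i\Z)\times\C/(\Z+(t+i)\Z)\). The second factor is the Tate family \(\C^*/(\kappa q)^\Z\) of \eqref{eq:real-tate-coordinates}. Accordingly, at \(\lambda=0\) the chart data \(u_a=\kappa\), \(u_b=u_c=1\), \(u_d=\kappa\) decouple \(z_1\) from \((x_k,y_k)\), which gives \(\mathcal B_0\cong E_i\times N_0\).

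\emph{Step 3 (the flow).} The map \(\mathcal B\to K_0\times[0,1]\) is a proper smooth map. It is a submersion away from the critical curves, and along those curves the local form is \((z,x,y,\lambda)\mapsto(xy,\lambda)\) in coordinates depending smoothly on \(\lambda\). I choose a lift \(X\) of \(\partial_\lambda\) with \(dq(X)=0\) as follows.
\begin{itemize}
\item Near the critical locus, take \(X=\partial_\lambda\) in the parametric node charts.
\item On the annulus about \(S\), take \(X=\partial_\lambda\) in the real trivialization \((q,\lambda,[v])\mapsto(q,[P_\lambda(t)v])\). This is well defined on the quotient because the monodromy \(M\) does not depend on \(\lambda\).
\item Elsewhere, take any horizontal lift.
\end{itemize}
These pieces are patched by a partition of unity on the total space. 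Convex combinations of lifts of \(\partial_\lambda\) that are tangent to the \(q\)-level sets remain such lifts. Since \(\mathcal B\) is compact, the flow exists for time \(1\), and the time-one map is the required \(H\). On the annulus it is \(v\mapsto v\) in real period coordinates, and it preserves \(q\) by construction.

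\emph{Main obstacle.} I expect the hardest part to be Step 3 near the critical curve. The parametric node charts must be arranged so that \(\partial_\lambda\) is genuinely tangent to the \(q\)-levels and compatible with the \(\Gamma\)-action. The group elements themselves depend on \(\lambda\) through the \(u_j\), so \(\partial_\lambda\) in a chart is not automatically \(\Gamma\)-invariant. My first attempt will be to conjugate the \(\lambda\)-dependent action to the \(\lambda=0\) action by the diagonal rescaling \(z_1\mapsto z_1\,\cdot\) (powers of \(u_a(\lambda)/\kappa\), etc.). This rescaling is defined via the \(\ell_j\) and a choice of logarithms, and is smooth in \(\lambda\) because the \(u_j\) are. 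In the conjugated coordinates \(\partial_\lambda\) is invariant, and \(xy\) is preserved because \(x\) and \(y\) are scaled by inverse factors.
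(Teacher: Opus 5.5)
Your proposal is correct and follows essentially the paper's route. The paper likewise builds a simultaneous filling with \(\lambda\) as a parameter (the Step~1 estimates of \cref{prop:rank-one-fill} are uniform in \(\lambda\)), glues it to the regular family via \(A(\lambda q)\), and takes the time-one flow of a lift of \(\partial_\lambda\) with \(dq=0\), patched by a partition of unity and equal to the constant-real-period field near \(S\). It works over an open \(U\supset[0,1]\) and over \(|q|<1+\eta\), so the flow lives in a manifold without boundary and \(H\) genuinely extends across \(S\); you should do the same rather than use closed \([0,1]\) and \(K_0\).

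The obstacle you anticipate does not arise. A node chart on the quotient is the image of a small open subset of \(U\times V_k\) on which the covering \(\rho\) is injective. So \(\partial_\lambda\) in such a chart is already a local vector field on the quotient, and the partition of unity on the quotient does all the patching; no \(\Gamma\)-invariance is needed. Drop the proposed conjugation. It is also doubtful that it could be made smooth at the node, since an intertwining rescaling must absorb the \(q\)-dependent multipliers through deck-counting coordinates such as \(\log|y_k|/\log|q|\).
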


\begin{proof}
We first construct a simultaneous filling and show that its
projection is proper. The flow of a suitable vector field then
gives the required diffeomorphism while preserving the prescribed
coordinates near the boundary.

\noindent\emph{Step 1. The parameter family and its filling.}

\smallskip
\noindent\emph{(a) The regular family.}
The matrices \(P_\lambda(t)\) depend holomorphically on
\((\lambda,t)\) and satisfy
\[
 P_1=P,\qquad
 P_0(t)=
 \begin{pmatrix}
  1&0&i&0\\
  0&1&0&t+i
 \end{pmatrix},
 \qquad
 P_\lambda(t+1)=P_\lambda(t)M.
\]
For any \(\lambda\in\C\),
\begin{equation}\label{eq:uniform-det}
 \det L_{P_\lambda(t)}
 =(1+|\lambda q|^2)^2-\frac{\log |q|}{2\pi}(1-|\lambda q|^4).
\end{equation}
For \(|\lambda|\le2\) and
\(0<|q|\le\epsilon\le\frac14\), we have
\(|\lambda q|\le\frac12\) and \(\log|q|<0\). Hence
\(
\det L_{P_\lambda(t)} >0.
\)
On the compact parameter region
\(
 \lambda\in[0,1],\, \epsilon\le |q|\le1,
\)
we likewise have \(|\lambda q|\le1\), \(\log|q|\le0\), and hence
\(\det L_{P_\lambda(t)}\ge1\).
Since the expression in \eqref{eq:uniform-det} is continuous
in \((\lambda,|q|)\), there exist
\[
 [0,1]\subset U\Subset\{|\lambda|<2\},
 \quad 0<\eta<\frac12,
\]
with \(U\) open, such that for any \(\lambda\in U,\ \epsilon\le|q|<1+\eta\),
\(
 \det L_{P_\lambda(t)}\ge\frac12.
\)
Together, the estimates on these two regions show that for
\(\lambda\in U\) and \(0<|q|<1+\eta\),
\[
 \det L_{P_\lambda(t)}\ge\frac12.
\]

The two quotient constructions in \cref{prop:period-bridge}
therefore apply with \(\lambda\) retained as a parameter. They give
a proper holomorphic family
\[
 \mathfrak B^*
 \longrightarrow
 U\times\{0<|q|<1+\eta\}.
\]
Both actions fix \(\lambda\).

\smallskip
\noindent\emph{(b) The simultaneous local filling.}
On the disc already fixed in
\cref{prop:rank-one-fill},
\[
 |\lambda q|\le\tfrac12
 \quad\Longrightarrow\quad
 \det A(\lambda q)=1+(\lambda q)^2\ne0.
\]
Hence the normalized periods are
\[
 A(\lambda q)^{-1}P_\lambda(t)
   =(I_2\ \ tB+C(\lambda q)).
\]
The plumbing space \(\mathcal V\) is independent of \(\lambda\).
Replacing \(C(q)\) by \(C(\lambda q)\) in the action of
\cref{prop:rank-one-fill} gives
\[
 \Gamma\curvearrowright U\times\mathcal V,
 \qquad
 \mathfrak T=(U\times\mathcal V)/\Gamma,
\]
where every element of \(\Gamma\) fixes \(\lambda\).
The coefficients depend holomorphically on \((\lambda,q)\),
and the estimates for \(C(\lambda q)\) in
\cref{prop:rank-one-fill} hold uniformly for \(\lambda\in U\),
so the same compact-set argument proves that the joint action
on \(U\times\mathcal V\) is holomorphic and proper.
Therefore the stabilizers are finite, and \(\Gamma \cong \Z^2\) again implies the action is free.
The Holomorphic Quotient Manifold Theorem
\cite[Theorem~1.16]{Lee2024} then makes \(\mathfrak T\)
a complex fourfold.
Let \(f:\mathfrak T\to U\times\Delta_\epsilon\) be the projection.
Since the action at \(q=0\) is independent of \(\lambda\),
\[
 f^{-1}(\lambda_0,0)
 \cong E_i\times\bigl(\Pone/(0\sim\infty)\bigr)
 \qquad(\lambda_0\in U).
\]
These fibres are compact and connected, and the other fibres are
connected complex tori. The local models
\[
 (\lambda,u,x,y)\longmapsto(\lambda,xy)
\]
and the regular submersion charts show that \(f\) is open.

The relatively compact neighbourhood argument in Step~6
of the proof of \cref{prop:rank-one-fill} therefore applies
near each \((\lambda_0,0)\).
Together with properness over \(U\times\Delta_\epsilon^*\),
this proves that \(f\) is proper.

\smallskip
\noindent\emph{(c) Comparison and gluing.}
On \(U\times\Delta_\epsilon^*\), the identity
\[
 P_\lambda(t)
 =A(\lambda q)\bigl(I_2\;\;tB+C(\lambda q)\bigr)
\]
induces the biholomorphic isomorphism
\[
 \mathfrak T|_{U\times\Delta_\epsilon^*}
 \longrightarrow
 \mathfrak B^*|_{U\times\Delta_\epsilon^*},
 \qquad
 (\lambda,q,[\zeta])
 \longmapsto
 (\lambda,q,[A(\lambda q)\zeta]).
\]
It covers the identity on \(U\times\Delta_\epsilon^*\).
The comparison and gluing argument of
\cref{subsec:period-comparison}, with \(\lambda\)
retained as a parameter, gives a Hausdorff complex fourfold
\[
 \mathfrak B^{\mathrm{an}}
 =
 \mathfrak T
 \underset{U\times\Delta_\epsilon^*}{\cup}
 \mathfrak B^*
\]
and a proper holomorphic map
\[
 \Pi:\mathfrak B^{\mathrm{an}}
 \longrightarrow U\times\Delta_{1+\eta},
 \quad \Pi=(\lambda,q).
\]

\smallskip
\noindent\emph{Step 2. A flow with prescribed boundary coordinates.}
The map to the base disc has critical points, so we do not apply
the submersion theorem to that map. Instead, we lift the parameter
vector field \(\partial_\lambda\), keeping \(q\) fixed. The node
equation \(q=xy\), which is independent of \(\lambda\), permits
such a lift also along the critical locus.

For an open interval
\([0,1]\subset I\Subset U\cap\R\), the parameter slices are
\[
 \mathcal B_\lambda
   =\Pi^{-1}(\{\lambda\}\times K_0),
 \qquad
 \mathfrak B=\Pi^{-1}([0,1]\times K_0).
\]
Properness of \(\Pi\) makes \(\mathfrak B\) compact.
The vector fields below are defined on a neighbourhood of
\(\mathfrak B\) in the smooth manifold without boundary
\(\Pi^{-1}(I\times\Delta_{1+\eta})\).

Choose an open covering \(U_a\) of
\(\Pi^{-1}(I\times\Delta_{1+\eta})\) by node charts with coordinates
\((\lambda,u,x,y)\), \(q=xy\), and regular charts with coordinates
\((\lambda,q,\text{fibre coordinates})\).
In each chart, the vector field \(V_a=\partial_\lambda\) satisfies
\[
 d\lambda(V_a)=1,\quad dq(V_a)=0
\]
and keeps the remaining coordinates fixed.

On an open annulus about \(S=\partial K_0\), the real
period coordinates are
\begin{equation*}
 \begin{gathered}
 q=re^{2\pi i\theta},\qquad
 t=\theta+\frac{\log r}{2\pi i},\qquad
 \zeta=P_\lambda(t)v,\\
 1-\eta'<r<1+\eta',\qquad
 0<\eta'<\min\{1,\eta\},\qquad
 \theta\in\R,\quad v\in\R^4,\\
 v\sim v+n\quad(n\in\Z^4),\qquad
 (\theta+1,v)\sim(\theta,Mv).
 \end{gathered}
\end{equation*}
The field
\[
 V_{\mathrm{ann}}
   =\partial_\lambda|_{(r,\theta,v)}
\]
descends because these identifications are independent
of \(\lambda\).

Choose a partition of unity subordinate to this covering and the
annular neighbourhood, and define
\[
 V=\chi_{\mathrm{ann}}V_{\mathrm{ann}}
       +\sum_a\chi_aV_a,
 \quad
 \chi_{\mathrm{ann}}+\sum_a\chi_a=1,
\]
where \(\chi_{\mathrm{ann}}=1\) over a smaller annulus about
\(S\), uniformly for \(0\le\lambda\le1\). Thus
\begin{equation*}
 d\lambda(V)=1,\quad dq(V)=0,\quad
 V=V_{\mathrm{ann}}\quad\text{over the smaller annulus}.
\end{equation*}

For \(p\in\mathcal B_{\lambda_0}\), the flow satisfies
\[
 \lambda(\operatorname{Fl}_s^V(p))=\lambda_0+s,
 \quad
 q(\operatorname{Fl}_s^V(p))=q(p)
\]
wherever defined. Its trajectory therefore remains in the compact set
\(\mathfrak B\) while \(0\le\lambda_0+s\le1\).
By the escape lemma
\cite[Lemma~9.19]{Lee2013}, the flow exists for
\(-\lambda_0\le s\le1-\lambda_0\).
The time-one map is therefore a diffeomorphism
\[
 H:=\operatorname{Fl}_1^V:{\mathcal B_0} \longrightarrow {\mathcal B_1}
\]
with inverse given by the time-minus-one map. Since the flow
preserves \(q\), the following diagram commutes:
\[
 \begin{tikzcd}[column sep=large,row sep=large]
 \mathcal B_0
   \arrow[r,"H","\cong"']
   \arrow[d,"q"']
 &
 \mathcal B_1
   \arrow[d,"q"]\\
 K_0\arrow[r,equal]&K_0.
 \end{tikzcd}
\]

On the prescribed annulus,
\[
 (\dot\lambda,\dot r,\dot\theta,\dot v)=(1,0,0,0),
\]
so
\[
 \operatorname{Fl}_s^V
 \bigl(\lambda_0,q,[P_{\lambda_0}(t)v]\bigr)
 =
 \bigl(\lambda_0+s,q,[P_{\lambda_0+s}(t)v]\bigr).
\]
In particular,
\[
 H(q,[P_0(t)v])=(q,[P_1(t)v]).
\]
Since the annular prescription holds for every
\(0\le\lambda\le1\), this formula also defines the flow
on the outer half of the annulus. It therefore extends
\(H\) across \(S\), with inverse obtained by interchanging
\(P_0\) and \(P_1\).

The comparison
\eqref{eq:one-end-completions} identifies
\[
 \mathcal B_1=\overline{\mathcal G}_0|_{K_0}.
\]
At \(\lambda=0\), the periods and the filling action split
into two factors. The resulting identification
\(\mathcal B_0\cong E_i\times N_0\) is written explicitly below.

\end{proof}

\subsection{Product identifications at the two ends}
\label{subsec:relative-products}

We now combine the flow with the product identification at
\(\lambda=0\). The construction below gives the formula on the
regular part, verifies its compatibility with the node charts,
and specifies the boundary coordinates required in \cref{sec:recognition}.

\begin{proposition}\label[proposition]{prop:relative-tate}
There is a base-preserving diffeomorphism
\[
 \begin{aligned}
 D_0:\overline{\mathcal G}_0|_{K_0}
   &\xrightarrow{\;\cong\;}E_i\times N_0.
 \end{aligned}
\]
It extends across the cutting circle. On an open annulus about
this circle, it preserves the full real period markings
\(v\) at \(0\).
\end{proposition}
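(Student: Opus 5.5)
The plan is to set \(D_0:=\Phi\circ H^{-1}\). Here \(H:\mathcal B_0\to\mathcal B_1=\overline{\mathcal G}_0|_{K_0}\) is the base-preserving diffeomorphism of \cref{prop:parametric}, and \(\Phi:\mathcal B_0\to E_i\times N_0\) is an explicit biholomorphism over \(K_0\) realizing the splitting at \(\lambda=0\). Since \(H\) already extends across \(S\) and satisfies \(H(q,[P_0(t)v])=(q,[P_1(t)v])\) on an annulus, it suffices to construct \(\Phi\) so that, on the regular part and in particular near \(S\), it is given in real period coordinates by a formula sending \(v=(v_1,v_2,v_3,v_4)\) (ordered as \(\mathcal B=(a_T,m,b_T,\ell)\)) to the pair \(([v_1+iv_3],\,(r,[\theta,\mu,\nu])=(r,[\theta,v_2,v_4]))\). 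Here the second factor uses the real Tate coordinates \eqref{eq:real-tate-coordinates}. The composite then preserves the full marking \(v\) near the cutting circle.

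First I treat the regular part. At \(\lambda=0\) one has \(P_0(t)=\begin{pmatrix}1&0&i&0\\0&1&0&t+i\end{pmatrix}\), so \(\zeta_1=v_1+iv_3\) and \(\zeta_2=v_2+(t+i)v_4\). The lattice splits as \((\Z+i\Z)\times(\Z+(t+i)\Z)\), and the monodromy identification \((t+1,v)\sim(t,Mv)\) acts only on \((v_2,v_4)\) by \((v_2,v_4)\mapsto(v_2+v_4,v_4)\). That is exactly the seam \eqref{eq:tate-coordinate-seam}. Hence \((q,[\zeta])\mapsto([\zeta_1],(q,[e^{2\pi i\zeta_2}]_{(\kappa q)^\Z}))\) is a well-defined biholomorphism of the regular part onto \(E_i\times(N_0\text{ regular part})\). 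In real coordinates it is the formula above.

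Next I extend \(\Phi\) across \(q=0\). In the filling of \cref{prop:parametric} at \(\lambda=0\), one has \(A(0)=I_2\) and \(C(0)=iI_2\). The generators then act, for all \(q\) rather than only at \(q=0\), by \(\gamma_1(z_1,x_k,y_k)_k=(\kappa z_1,x_k,y_k)_k\) and \(\gamma_2(z_1,x_k,y_k)_k=(z_1,\kappa^{-1}x_k,\kappa y_k)_{k+1}\), since \(u_a=u_d=\kappa\) and \(u_b=u_c=1\). So \(\mathcal V=\C^*_{z_1}\times\mathcal V'\), where \(\mathcal V'\) is the plumbing chain, and \(\Gamma\) acts factorwise. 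Thus \(\mathcal V/\Gamma\cong(\C^*/\kappa^\Z)\times(\mathcal V'/\langle\gamma_2\rangle)\). The second factor is by definition the standard Tate one-node completion \(N_0\) of \(\C^*/(\kappa q)^\Z\): under \(F\), \(\gamma_2\) becomes \(z_2\mapsto\kappa q z_2\). The first factor is \(E_i\) via \(z_1=e^{2\pi i\zeta_1}\). These identifications agree with the regular-part formula through the maps \(F,\iota\) of Step~3(d), because both are induced by \(z_j=e^{2\pi i\zeta_j}\). Over the outer annulus \(\epsilon\le|q|\le1\), the gluing \eqref{eq:one-end-completions} (with \(\lambda=0\)) matches the regular Tate family. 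This gives a global \(\Phi\) over \(K_0\), and also over a slightly larger disc, so it extends across \(S\).

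The main obstacle is bookkeeping rather than analysis. One must check that the normalization \(A(\lambda q)\) equals the identity at \(\lambda=0\), so that the period coordinate \(v\) on \(\mathcal B_0\) literally coincides with the Tate coordinates \((\mu,\nu)=(v_2,v_4)\) and the \(E_i\) coordinate \((v_1,v_3)\). One must also check that the sign and orientation conventions for \(\theta\) and the seam match \eqref{eq:tate-coordinate-seam} rather than its inverse. Since \(P_0(t+1)=P_0(t)M\) and \(t+i\) is exactly the Tate period, this reduces to a direct comparison. Composing \(D_0=\Phi\circ H^{-1}\) then gives the base-preserving diffeomorphism. It preserves \(v\) on an open annulus about the cutting circle because both \(H^{-1}\) and \(\Phi\) do.
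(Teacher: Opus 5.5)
Your proposal is correct and follows the paper's proof essentially step for step. Your \(\Phi\) is the paper's product identification \(J\): it glues the regular-part map \(J_{\mathrm{reg}}\) to \(J_{\mathrm{loc}}\), which comes from the factorwise action of \(\gamma_1,\gamma_2\) on \(\C^*\times\mathcal W_{\mathrm{node}}\) at \(\lambda=0\) and agrees with \(J_{\mathrm{reg}}\) on the overlap because \(A(0)=I_2\). Then \(D_0=J\circ H^{-1}\) yields exactly the annular formula \eqref{eq:whole-block-collar}. The only change needed is the name \(\Phi\), which the paper reserves for the holomorphic gluing map in \cref{sec:global}.
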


\begin{proof}
\noindent\emph{Step 1. The product identification at \(\lambda=0\).}
At \(\lambda=0\), the periods \(P_0(t)\) from
\cref{prop:parametric} give the product
\[
 \C^2/P_0(t)\Z^4
 =
 \frac{\C}{\Z+i\Z}
 \times
 \frac{\C}{\Z+(t+i)\Z}.
\]
To distinguish additive and multiplicative equivalence classes,
we use the notation
\[
 [\zeta]_\Lambda=\zeta+\Lambda,
 \qquad
 [u]_{\alpha^\Z}=\{\alpha^nu:n\in\Z\}.
\]
Using the exponential coordinates of
\cref{subsec:node-topology} and the identification
\(E_i\cong\C^*/\kappa^\Z\), the product map on the regular part is
\[
 \begin{aligned}
 J_{\mathrm{reg}}:
 \mathcal B_0|_{K_0\setminus\{0\}}
 &\longrightarrow
 E_i\times N_0|_{K_0\setminus\{0\}},\\
 \bigl(q,[(\zeta_1,\zeta_2)]_{P_0(t)\Z^4}\bigr)
 &\longmapsto
 \bigl(
 [e^{2\pi i\zeta_1}]_{\kappa^\Z},
 (q,[e^{2\pi i\zeta_2}]_{(\kappa q)^\Z})
 \bigr).
 \end{aligned}
\]

It remains to extend this product through the central fibre.
Let \(\mathcal W_{\mathrm{node}}\) be the complex surface obtained
by gluing the charts \(\{x_ky_k=q,\ q\in\Delta_\epsilon\}\)
using \eqref{eq:adjacent-node-chart}. Since the transitions leave
the \(\C^*\)-coordinate fixed, the local plumbing space is
\[
 \mathcal V=\C^*\times\mathcal W_{\mathrm{node}},
\]
and at \(\lambda=0\) the two generators \(\gamma_1,\gamma_2\) act on these two factors separately:
\[
 \begin{aligned}
 \gamma_1(u,\omega)&=(\kappa u,\omega),\\
 \gamma_2(u,\omega)&=(u,\gamma\omega),\\
 \gamma[(x,y,q)_k]
   &=[(\kappa^{-1}x,\kappa y,q)_{k+1}].
 \end{aligned}
\]
Their quotient therefore has the product identification
\[
 \begin{aligned}
 J_{\mathrm{loc}}:
 \frac{\C^*\times\mathcal W_{\mathrm{node}}}
      {\langle\gamma_1,\gamma_2\rangle}
 &\xrightarrow{\;\cong\;}
 \frac{\C^*}{\kappa^\Z}
 \times
 \frac{\mathcal W_{\mathrm{node}}}{\langle\gamma\rangle},\\
 [(u,\omega)]_\Gamma
 &\longmapsto
 \bigl([u]_{\kappa^\Z},
       [\omega]_{\langle\gamma\rangle}\bigr).
 \end{aligned}
\]
Here the second factor is the local Tate filling.

The two product maps agree under the filling comparison.
Indeed, since \(A(0)=I_2\), the corresponding representatives
on the punctured overlap are
\[
 \bigl(q,[(\zeta_1,\zeta_2)]_{P_0(t)\Z^4}\bigr)
 \longleftrightarrow
 [(u,\omega)]_\Gamma,
 \qquad
 \begin{cases}
 u=e^{2\pi i\zeta_1},\\
 \omega=[(qe^{-2\pi i\zeta_2},e^{2\pi i\zeta_2},q)_0].
 \end{cases}
\]
For any chart representative
\(\omega=[(x_k,y_k,q)_k]\) of this same point, the
chart-independent coordinate satisfies
\[
 q^ky_k=e^{2\pi i\zeta_2}.
\]
Moreover, the action of \(\gamma\) becomes
\[
 \gamma:\ q^ky_k
 \longmapsto q^{k+1}\kappa y_k
 =\kappa q(q^ky_k).
\]
Hence the corresponding quotient classes are related by
\[
 [\omega]_{\langle\gamma\rangle}
 \longleftrightarrow
 (q,[q^ky_k]_{(\kappa q)^\Z}),
\]
so \(J_{\mathrm{loc}}\) agrees with \(J_{\mathrm{reg}}\) on
the punctured overlap. They bring a base-preserving product
identification on the whole block:
\[
 J:\mathcal B_0\xrightarrow{\;\cong\;}E_i\times N_0.
\]

\smallskip
\noindent\emph{Step 2. The diffeomorphism for the end at \(0\).}
We now have a product identification at \(\lambda=0\).
The backward flow from \cref{prop:parametric}
takes the original completed block to this endpoint, so the
required diffeomorphism is the composition
\begin{equation*}
 \begin{tikzcd}[column sep=large,row sep=large]
 \overline{\mathcal G}_0|_{K_0}=\mathcal B_1
   \arrow[r,"H^{-1}"]
   \arrow[rr,bend left=25,"D_0"]
   \arrow[d,"q"']
 &
 \mathcal B_0
   \arrow[r,"J"]
   \arrow[d,"q"]
 &
 E_i\times N_0
   \arrow[d,"q"]\\
 K_0\arrow[r,equal]&K_0\arrow[r,equal]&K_0.
 \end{tikzcd}
\end{equation*}
In other words,
\[
 D_0=J\circ H^{-1}.
\]

On the annulus about the cutting circle, \(H^{-1}\) preserves
the real period coordinate \(v\). For
\(v=(x,\mu,y,\nu)^{\mathsf T}\), this gives
\[
 \begin{aligned}
 (q,[P(t)v])
 &\xmapsto{H^{-1}}(q,[P_0(t)v]),\\
 P_0(t)v
 &=\bigl(x+iy,\ \mu+(t+i)\nu\bigr)^{\mathsf T}.
 \end{aligned}
\]
By \eqref{eq:real-tate-coordinates}, the second component
corresponds to \((r,[\theta,\mu,\nu])\). Writing \(E_i\) in
additive coordinates, we obtain
\begin{equation}\label{eq:whole-block-collar}
 D_0(q,[P(t)v])
 =
 \bigl(
 [x+iy]_{\Z+i\Z},
 (r,[\theta,\mu,\nu])
 \bigr).
\end{equation}
This formula holds on an open annulus about \(S\).
Away from that annulus, the same map \(D_0=J\circ H^{-1}\)
is defined using the backward flow on the rest of the block.

\end{proof}

\begin{remark}
\label[remark]{rem:infinity-product}
We also need the product identification at \(\infty\), including
its change of marking. With \(w=q^{-1}\), \(s=-t\), and
\(w=e^{2\pi is}\), take the unnormalized periods
\[
 P_{\infty,\lambda}(s)
 =
 P_{\mathrm{tw}}(-\lambda w)
 \begin{pmatrix}I_2&sB\\0&I_2\end{pmatrix},
 \qquad 0\le\lambda\le1.
\]
They satisfy
\[
 P_{\infty,\lambda}(s+1)
 =P_{\infty,\lambda}(s)M,
\]
and for \(|w|\in(0,1]\),
\[
 \det L_{P_{\infty,\lambda}(s)}
 =
 (1+|\lambda w|^2)^2
 -\frac{\log |w|}{2\pi}(1-|\lambda w|^4)
 >0.
\]
Near \(w=0\), the normalization is
\[
 P_{\infty,\lambda}(s)
 =
 A(-\lambda w)
 (I_2\;\;sB+C(-\lambda w)).
\]
Thus the determinant calculation and uniform filling estimates
are the same as at \(0\). The proof of
\cref{prop:parametric} gives completed blocks
\(\mathcal B_{\infty,\lambda}\to K_\infty\) and a flow
\[
 H_\infty:
 \mathcal B_{\infty,0}
 \xrightarrow{\;\cong\;}
 \mathcal B_{\infty,1},
\]
preserving the base and the full real period coordinates
on an annulus about the cutting circle.

To obtain \(D_\infty\), we identify
\(\mathcal B_{\infty,1}\) with the original completed end
and \(\mathcal B_{\infty,0}\) with the product model.

\smallskip
\noindent\emph{(a) The case \(\lambda=1\).}
On the regular part, the period matrices satisfy
\[
 P_{\infty,1}(s)
 =
 \begin{pmatrix}0&w\\-w&0\end{pmatrix}P(-s)G_0.
\]
Since \(G_0\Z^4=\Z^4\), this identity induces the fibrewise
isomorphism
\[
 \begin{aligned}
 \C^2/P(-s)\Z^4
 &\xrightarrow{\;\cong\;}
 \C^2/P_{\infty,1}(s)\Z^4,\\
 [(\zeta_1,\zeta_2)]
 &\longmapsto[(w\zeta_2,-w\zeta_1)].
 \end{aligned}
\]

It remains to check compatibility with the fillings.
Near \(w=0\), both constructions use the same normalized
filling model from \cref{rem:infinity-filling}, with
periods
\[
 (I_2\;\;sB+C(-w))=(I_2\;\;Z_{\infty}(s)).
\]
On the punctured overlap \(0<|w|<\epsilon\), the filling
comparisons form the diagram
\[
 \begin{tikzcd}
 \C^2/(I_2\;\;Z_\infty(s))\Z^4
   \arrow[r,"A(w^{-1})"]
   \arrow[d,equal]
 &
 \C^2/P(-s)\Z^4
   \arrow[d,"\cong"]
 \\
 \C^2/(I_2\;\;Z_\infty(s))\Z^4
   \arrow[r,"A(-w)"']
 &
 \C^2/P_{\infty,1}(s)\Z^4.
 \end{tikzcd}
\]
The diagram commutes because
\[
\begin{pmatrix}0&w\\-w&0\end{pmatrix}
A(w^{-1})=A(-w).
\]
Thus the regular-part isomorphism and the identity on the
common filling model fit together to define a base-preserving
diffeomorphism
\[
 \overline{\mathcal G}_\infty|_{K_\infty}
 \xrightarrow{\;\cong\;}
 \mathcal B_{\infty,1}.
\]

\smallskip
\noindent\emph{(b) The case \(\lambda=0\).}
The periods reduce to
\[
 P_{\infty,0}(s)
 =
 \begin{pmatrix}
  1&0&i&0\\
  0&1&0&s+i
 \end{pmatrix}
 =P_0(s).
\]
The product identification in \cref{prop:relative-tate} therefore
gives
\[
 J_\infty:
 \mathcal B_{\infty,0}
 \xrightarrow{\;\cong\;}E_i\times N_\infty.
\]
In the following, \(\mathcal B_{\infty,1}\) is identified with
\(\overline{\mathcal G}_\infty|_{K_\infty}\) by the
diffeomorphism in (a). Composing the two diffeomorphisims, we get 
\[
 \overline{\mathcal G}_\infty|_{K_\infty}
 \xrightarrow{\ H_\infty^{-1}\ }
 \mathcal B_{\infty,0}
 \xrightarrow{\ J_\infty\ }
 E_i\times N_\infty.
\]
Define
\[
 D_\infty=J_\infty\circ H_\infty^{-1}.
\]

On the regular part, the identification in (a) changes the real
period marking from \(v\) to \(G_0v\). The annular formula therefore follows
from the changes of base
and marking:
\[
 \begin{gathered}
 q=re^{2\pi i\theta},
 \quad
 w=r^{-1}e^{-2\pi i\theta},
 \quad
 s=-t=-\theta+\frac{\log(r^{-1})}{2\pi i},\\
 v\longmapsto G_0v
 =(v_1,v_2,-v_3,-v_4)^{\mathsf T}.
 \end{gathered}
\]
The flow preserves this transformed real period
vector. At the product endpoint, we therefore have
\[
 P_0(s)G_0v
 =
 \begin{pmatrix}
  1&0&i&0\\
  0&1&0&s+i
 \end{pmatrix}
 \begin{pmatrix}
  v_1\\v_2\\-v_3\\-v_4
 \end{pmatrix}
 =
 \begin{pmatrix}
  v_1-iv_3\\
  v_2+(s+i)(-v_4)
 \end{pmatrix}.
\]
The first component gives the \(E_i\)-coordinate, while
the second one corresponds to the Tate coordinates
\[
 [v_2+(s+i)(-v_4)]_{\Z+(s+i)\Z}
 \longleftrightarrow
 (r^{-1},[-\theta,v_2,-v_4]).
\]
Thus the annular formula at \(\infty\) is
\begin{equation}\label{eq:infinity-D-boundary}
 D_\infty(q,[P(t)v])
 =
 \bigl(
 [v_1-iv_3]_{\Z+i\Z},
 (r^{-1},[-\theta,v_2,-v_4])
 \bigr).
\end{equation}
As at \(0\), this formula holds on an open annulus and describes
the extension of \(D_\infty\) across the cutting circle.
\end{remark}

The maps \(D_0,D_\infty\) are smooth product identifications;
they do not assert holomorphic product decompositions of the
original completed families. In particular,
\eqref{eq:whole-block-collar} and
\eqref{eq:infinity-D-boundary} prepare the coordinates needed to
compute the smooth boundary map in \cref{sec:recognition}.

\section{Holomorphic gluing}\label{sec:global}

We will glue the two completions constructed in \cref{sec:filling}
along their common regular part \(\mathcal G\), using a fibrewise
holomorphic translation. The result is a compact complex threefold
\(X\) with a holomorphic map \(g:X\to\Pone\).
After constructing the translating section and computing its real
period coordinates on the cutting circle, we form the quotient
and determine the fibres and critical locus of \(g\).
The boundary calculation in \cref{sec:recognition} will show how
the chosen translation ensures that the resulting four-dimensional
factor is simply connected.

\subsection{The translating section}\label{sec:affine}

The linear change of marking alone does not specify the gluing:
we must also choose a translation in each torus fibre. The
holomorphic section constructed below supplies this translation
and has explicit boundary values. For \(q=e^{2\pi it}\)
and \(p_j(t)=P(t)e_j\), set
\begin{equation}\label{eq:u}
 u(t):=
 -\frac{t(t+1)}2p_2(t)+t\,p_4(t)=
 \binom{
 q\bigl(-\frac{t(t-1)}2+it\bigr)
 }{
 \frac{t(t-1)}2+it
 }.
\end{equation}
A direct calculation shows 
\begin{equation*}
 u(t+1)-u(t)
 =p_4(t)\in P(t)\Z^4.
\end{equation*}
Thus
\[
 [u(t+1)]_{P(t+1)\Z^4}
 =
 [u(t)]_{P(t)\Z^4},
\]
and \(\sigma(q):=[u(t)]\in\mathcal G_q\) defines a single-valued
holomorphic section.
Its fibrewise translation is
\[
 \tau_\sigma(q,[\zeta])=(q,[\zeta+u(t)])
\]
with \(\tau_\sigma^{-1}=\tau_{-\sigma},\ 
 p\circ\tau_\sigma=p\).

\begin{proposition}
\label[proposition]{prop:primitive}
On the boundary circle \(S=\{|q|=1\}\), we write
\(q=e^{2\pi i\theta}, \ \theta\in\R\).
The restriction \(\sigma|_S\) has real period lift
\begin{equation*}
 \mathbf a(\theta):=
 \left(
 0,-\frac{\theta(\theta+1)}2,0,\theta
 \right)^{\mathsf T}.
\end{equation*}
 In real period coordinates, this translation is
\[
 (\theta,[v])
 \longmapsto
 (\theta,[v+\mathbf a(\theta)]).
\]
\end{proposition}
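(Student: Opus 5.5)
The plan is to read off the real period lift directly from the definition \eqref{eq:u} of \(u\), and then to check that the resulting translation formula is compatible with the mapping-torus identification of \(\mathcal G|_S\) from \cref{prop:period-bridge}.

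\emph{Step 1: the lift.} By definition, \(u(t)=-\tfrac{t(t+1)}2\,p_2(t)+t\,p_4(t)\) with \(p_j(t)=P(t)e_j\). By linearity this gives
\[
 u(t)=P(t)\Bigl(0,-\tfrac{t(t+1)}2,0,t\Bigr)^{\mathsf T}
\]
for every \(t\in\C\). On \(S\) we have \(r=1\), so the logarithmic coordinate \(t=\theta+\frac{\log r}{2\pi i}\) is the real number \(\theta\). The coefficient vector then equals \(\mathbf a(\theta)\in\R^4\). Since \(P(\theta)\) is a real-linear isomorphism \(\R^4\to\C^2\) (\cref{prop:period-bridge}, Step~1), \(\mathbf a(\theta)\) is the unique real lift of \(u(\theta)\) in the coordinates \eqref{eq:real-period-trivialization}. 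Consequently
\[
 \zeta+u(\theta)=P(\theta)v+P(\theta)\mathbf a(\theta)=P(\theta)\bigl(v+\mathbf a(\theta)\bigr)
 \qquad\text{for }\zeta=P(\theta)v.
\]
This means that \(\tau_\sigma\) acts as \((\theta,[v])\mapsto(\theta,[v+\mathbf a(\theta)])\) on the cover \(\theta\in\R\).

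\emph{Step 2: descent.} The formula must be well defined on \(\mathcal G|_S\), that is, compatible with the seam \((\theta+1,[v])\sim(\theta,[Mv])\). Applying the translation at \(\theta+1\) and then the seam gives \((\theta,[Mv+M\mathbf a(\theta+1)])\). Since \(M\) adds the fourth coordinate to the second,
\[
 M\mathbf a(\theta+1)=\Bigl(0,-\tfrac{(\theta+1)(\theta+2)}2+(\theta+1),0,\theta+1\Bigr)^{\mathsf T}=\mathbf a(\theta)+e_4,
\]
and \(e_4\in\Z^4\). So the result agrees with translating at \(\theta\) modulo the lattice. This is the real-coordinate form of the identity \(u(t+1)-u(t)=p_4(t)\) already noted.

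I do not expect a genuine obstacle. The only points needing care are that \(t\) is real exactly on \(|q|=1\), which is what makes the lift real, and that the ordering of columns matches the basis \(\mathcal B\), so that the seam matrix is \(M\) and not \(M^{-1}\).
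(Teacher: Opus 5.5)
Your proposal is correct and takes essentially the same approach as the paper. Both proofs use linearity to write \(u(\theta)=P(\theta)\mathbf a(\theta)\) with \(\mathbf a(\theta)\in\R^4\) real on \(S\), and read off the translation in real period coordinates. Your Step~2 seam check is correct but redundant: it is the real-coordinate form of \(u(t+1)-u(t)=p_4(t)\), which the paper verifies before stating the proposition.
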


\begin{proof}
On \(S\), the logarithmic parameter \(t=\theta\) is real.
Equation~\eqref{eq:u} gives
\(u(\theta)=P(\theta)\mathbf a(\theta)\).
Since \(\mathbf a(\theta)\in\R^4\), this is a real period lift
of \(\sigma|_S\). Thus the translation acts by
\[
 (q,[P(\theta)v])
 \longmapsto
 (q,[P(\theta)(v+\mathbf a(\theta))]),
\]
which verifies the stated formula in real period coordinates.

\end{proof}

\subsection{Gluing the completed ends}
We use \(\tau_\sigma\) to identify the regular parts of the two
completed ends. Recall from \cref{cor:both-fillable} that
\[
 \overline p_j:\overline{\mathcal G}_j\to U_j,
 \qquad
 \jmath_j:\mathcal G
 \xrightarrow{\;\cong\;}
 \overline{\mathcal G}_j|_{\C_q^*},
 \qquad j=0,\infty.
\]
Their bases satisfy
\[
 U_0\cup U_\infty=\Pone,
 \qquad
 U_0\cap U_\infty=\C_q^*,
 \qquad
 w=q^{-1}.
\]

Through the identifications \(\jmath_0\) and
\(\jmath_\infty\), the translation \(\tau_\sigma\) constructs a holomorphic isomorphism \(\Phi\) between
the regular parts of the two completions.
The construction and its compatibility with the projections
are shown in the following commutative cube:
\[
\begin{tikzcd}[column sep=2.8em,row sep=2.8em]
&
\overline{\mathcal G}_0|_{\C_q^*}
  \arrow[rr,"\Phi"]
  \arrow[dl,"\jmath_0^{-1}" description]
  \arrow[ddd,blue!65!black,
    "\overline p_0"{pos=0.63}]
&&
\overline{\mathcal G}_\infty|_{\C_q^*}
  \arrow[dl,"\jmath_\infty^{-1}" description]
  \arrow[ddd,blue!65!black,
    "\overline p_\infty"{pos=0.55}]
\\
\mathcal G
  \arrow[ddd,blue!65!black,"p"']
&&
\mathcal G
&
\\
&&&
\\
&
\C_q^*
  \arrow[rr,"q\mapsto q^{-1}"{pos=0.75}]
  \arrow[dl,equal]
&&
\C_w^*
  \arrow[dl,"w\mapsto w^{-1}" description]
\\
\C_q^*
  \arrow[rr,equal]
&&
\C_q^*
  \arrow[from=2-3,to=5-3,
    crossing over,blue!65!black,"p"{pos=0.55}]
  \arrow[from=2-1,to=2-3,
    crossing over,"\tau_\sigma"'{pos=0.68}]
&
\end{tikzcd}
\]
The upper face gives
\begin{equation}\label{eq:two-end-gluing-map}
 \Phi
 =
 \jmath_\infty\circ\tau_\sigma\circ\jmath_0^{-1},
\end{equation}
while the side faces express compatibility with the projections.
On the lower face, \(q\) and \(w=q^{-1}\) represent the same
point of \(\Pone\). Thus \(\Phi\) preserves the base point.

\begin{proposition}\label[proposition]{prop:descent}
The quotient
\begin{equation*}
 X=
 \overline{\mathcal G}_0
 \cup_\Phi
 \overline{\mathcal G}_\infty
\end{equation*}
is a compact complex threefold.
The projections induce a proper holomorphic map
\(g:X\to\Pone\) whose critical locus is the disjoint union of two
complex tori of complex dimension \(1\).
The fibres over \(0\) and \(\infty\) satisfy
\[
 g^{-1}(0)\cong g^{-1}(\infty)
 \cong E_i\times\bigl(\Pone/(0\sim\infty)\bigr).
\]
\end{proposition}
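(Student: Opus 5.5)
The plan is to build \(X\) as an ordinary adjunction space of two complex manifolds along open subsets, using the biholomorphism \(\Phi\) of \eqref{eq:two-end-gluing-map}. Then we check Hausdorffness, compactness and the fibre and critical data one at a time.

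\emph{Complex manifold.} The map \(\Phi=\jmath_\infty\circ\tau_\sigma\circ\jmath_0^{-1}\) is a composition of biholomorphisms. Its domain \(\overline{\mathcal G}_0|_{\C_q^*}\) is open in \(\overline{\mathcal G}_0\), and its target \(\overline{\mathcal G}_\infty|_{\C_q^*}\) is open in \(\overline{\mathcal G}_\infty\). So \(X=(\overline{\mathcal G}_0\sqcup\overline{\mathcal G}_\infty)/(x\sim\Phi(x))\) carries the union of the two holomorphic atlases. These are compatible because \(\Phi\) is holomorphic, and the atlas is countable. Both \(\overline{\mathcal G}_j\) embed as open subsets.

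\emph{The map \(g\).} The lower face of the cube shows \(\overline p_\infty\circ\Phi=\overline p_0\) as points of \(\Pone\) (via \(w=q^{-1}\)). Hence \(\overline p_0\) and \(\overline p_\infty\) glue to a holomorphic map \(g:X\to\Pone\), with \(g^{-1}(U_j)=\overline{\mathcal G}_j\).

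\emph{Hausdorff, proper, compact.} This step needs the most care, since a gluing along open sets can fail to be Hausdorff. The plan is to reuse the argument of \cref{cor:both-fillable}. Two points with distinct images under \(g\) are separated by preimages of disjoint open sets in \(\Pone\), and \(g\) is continuous. Two points with the same image \(b\) both lie in \(g^{-1}(U_j)\) for some \(j\) with \(b\in U_j\). Indeed, if \(b\in\C_q^*\), both points lie in either chart, and if \(b\in\{0,\infty\}\), only one chart contains that fibre. That chart is a Hausdorff open subset of \(X\), so the points are separated there.

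Properness of \(g\) is local on the base. Over \(U_j\) the map \(g\) equals the proper map \(\overline p_j\), so \(g\) is proper. Since \(\Pone\) is compact, \(X=g^{-1}(\Pone)\) is compact.

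\emph{Fibres and critical locus.} The central fibres and critical curves come unchanged from \cref{prop:rank-one-fill} and \cref{cor:both-fillable}. Over \(\C_q^*\), \(g\) agrees with the submersion \(p\), so it has no critical points there. Over \(0\) and \(\infty\), the critical locus is \(\operatorname{Crit}(\overline p_j)\cong E_i\), with local form \(xy\). These two curves lie over distinct base points, so they are disjoint. This gives \(g^{-1}(0)\cong g^{-1}(\infty)\cong E_i\times(\Pone/(0\sim\infty))\).
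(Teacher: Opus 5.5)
Your proposal is correct and follows essentially the same route as the paper: glue the two completions along their regular parts via \(\Phi\), separate points either by their images in \(\Pone\) or inside a single open piece \(g^{-1}(U_j)\cong\overline{\mathcal G}_j\), get properness from the properness of \(\overline p_j\), and take the central fibres and critical curves from \cref{prop:rank-one-fill} and \cref{rem:infinity-filling}. The only cosmetic difference is that you deduce compactness of \(X\) from properness of \(g\) over the compact base, whereas the paper covers \(\Pone\) by the compact discs \(K_0,K_\infty\) and uses compactness of each \(g^{-1}(K_j)\) directly.
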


\begin{proof}
Since the projections satisfy
\(\overline p_\infty\circ\Phi=\overline p_0\) on the overlap,
they descend to a map
\(g:X\to\Pone\) with
\[
 g^{-1}(U_j)\cong\overline{\mathcal G}_j,
 \qquad
 g|_{g^{-1}(U_j)}=\overline p_j,
 \qquad j=0,\infty.
\]

Points with different images under \(g\) have disjoint
neighbourhoods pulled back from \(\Pone\). Points in the same
fibre lie in one of the open pieces
\(g^{-1}(U_j)\cong\overline{\mathcal G}_j\), where they can be
separated. Hence \(X\) is Hausdorff. The two countable
holomorphic atlases agree under \(\Phi\), so \(X\) is a complex
threefold and \(g\) is holomorphic.

The compact sets \(K_0,K_\infty\) cover \(\Pone\), and
properness of \(\overline p_0,\overline p_\infty\) shows
\[
 \begin{gathered}
 g^{-1}(K_j)\cong\overline p_j^{-1}(K_j)
 \quad\text{is compact},\qquad j=0,\infty,\\
 X=g^{-1}(K_0)\cup g^{-1}(K_\infty).
 \end{gathered}
\]
Hence \(X\) is compact, and \(g\) is proper.

The gluing takes place over \(\C^*\), so it leaves a neighbourhood
of each added fibre unchanged. By
\cref{prop:rank-one-fill,rem:infinity-filling},
\[
 \begin{aligned}
 g^{-1}(0)
 &\cong\overline p_0^{-1}(0)
 \cong E_i\times\bigl(\Pone/(0\sim\infty)\bigr),\\
 g^{-1}(\infty)
 &\cong\overline p_\infty^{-1}(\infty)
 \cong E_i\times\bigl(\Pone/(0\sim\infty)\bigr).
 \end{aligned}
\]
Over \(\C_q^*\), the map \(g\) is identified with the
submersion \(p:\mathcal G\to\C_q^*\).
Thus its only critical points are those of the two
fillings:
\[
 \operatorname{Crit}(g)
 =
 \operatorname{Crit}(\overline p_0)
 \sqcup
 \operatorname{Crit}(\overline p_\infty)
 \cong E_i\sqcup E_i.
\]

\end{proof}

\section{The diffeomorphism type of the total space}
\label{sec:recognition}

We complete the proof of \cref{thm:main} by determining the
diffeomorphism type of \(X\). The product identifications from
\cref{sec:relative}, together with the translation from
\cref{sec:global}, give an explicit boundary map.
This map yields a decomposition
\(X\cong_{\mathrm{diff}}\T^2\times Y\),
where \(Y\) is a closed smooth four-manifold.
Its fundamental group and Euler characteristic are computed
from the attaching map and \cref{lem:node-topology}, giving
\(\pi_1(Y)=0\) and \(\chi(Y)=2\).
As is well known, every simply connected closed smooth four-manifold
with Euler characteristic \(2\) is homotopy equivalent to \(S^4\).
It follows that \(Y\) is a smooth homotopy four-sphere.

It remains to identify the product \(\T^2\times Y\).
Hirsch's product theorem \cite[Theorem~6(a)]{Hirsch1965}
states that, for a smooth homotopy four-sphere \(Y\) and
a closed smooth manifold \(V\),
\[
V\times Y\cong_{\mathrm{diff}}V\times S^4.
\]
Applying this theorem with \(V=\T^2\), we obtain
\[
X\cong_{\mathrm{diff}}\T^2\times Y
\cong_{\mathrm{diff}}\T^2\times S^4.
\]
This argument does not require \(Y\) itself to be
diffeomorphic to the standard smooth \(S^4\).

\subsection{A product decomposition}
\label{subsec:product-over-base}

We express the boundary restriction of the holomorphic gluing map
in the smooth product coordinates. It is the product of a fixed
reflection of \(\T^2\) and a diffeomorphism between the boundaries
of the Tate blocks. Since the reflection extends over one product
block, it can be removed by changing the product identification
on that block.

For the discs \(K_0,K_\infty\) with common boundary \(S\) chosen
in \cref{sec:relative}, write \(\varpi_j:N_j\to K_j\) for the
Tate block projections. The real Tate coordinates introduced in
\cref{sec:topology} identify each boundary with the mapping torus
\[
 \partial N_j
 =\frac{\R_\theta\times\T^2_{\mu,\nu}}
 {(\theta+1,\mu,\nu)\sim(\theta,\mu+\nu,\nu)},
 \qquad j=0,\infty.
\]
Here the identification is exactly
\eqref{eq:tate-coordinate-seam}, and the coordinates
\([\theta,\mu,\nu]\) carry the markings fixed there.
We also use the identification
\[
 E_i\longrightarrow\T^2,
 \quad [x+iy]_{\Z+i\Z}\longmapsto[x,y].
\]
In these coordinates, the diffeomorphisms in
\cref{prop:relative-tate,rem:infinity-product} become
\[
 D_j:\overline{\mathcal G}_j|_{K_j}
 \xrightarrow{\;\cong_{\mathrm{diff}}\;}\T^2\times N_j.
\]

\begin{proposition}\label[proposition]{prop:product-splitting}
The map
\begin{equation}\label{eq:actual-four-dimensional-gluing}
 \begin{aligned}
 \psi:\partial N_0&\longrightarrow\partial N_\infty,\\
 [\theta,\mu,\nu]&\longmapsto
 \left[-\theta,\mu-\frac{\theta(\theta+1)}2,-\nu-\theta\right]
 \end{aligned}
\end{equation}
is a diffeomorphism. With the standard smooth structure
obtained by gluing along \(\psi\), the space
\[
 Y=N_0\cup_\psi N_\infty
\]
is a closed smooth four-manifold, and there is a
diffeomorphism
\begin{equation*}
 F:X\xrightarrow{\;\cong_{\mathrm{diff}}\;}\T^2\times Y.
\end{equation*}
\end{proposition}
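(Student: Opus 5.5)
The plan is to compute the holomorphic gluing map $\Phi$ of \eqref{eq:two-end-gluing-map} in the product coordinates $D_0$ and $D_\infty$ over an open annulus about $S$. We expect the result to be $\rho\times\psi$, where $\rho[x,y]=[x,-y]$ is a reflection of $\T^2$ and $\psi$ is the map in \eqref{eq:actual-four-dimensional-gluing} extended radially. We then absorb $\rho$ into the product identification at $\infty$.

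\emph{Step 1: the composite on the annulus.} Take a point $(q,[P(t)v])$ of $\overline{\mathcal G}_0$ near $S$, with $v=(x,\mu,y,\nu)^{\mathsf T}$, $q=re^{2\pi i\theta}$, and $t=\theta+\frac{\log r}{2\pi i}$. By \eqref{eq:whole-block-collar},
\[
D_0(q,[P(t)v])=\bigl([x,y],(r,[\theta,\mu,\nu])\bigr).
\]
By \cref{prop:primitive}, the translation $\tau_\sigma$ replaces $v$ by $v+\mathbf a(\theta)$. On $S$ the lift $\mathbf a(\theta)$ is real. Off $S$ we use the holomorphic section itself: we either check that $u(t)=P(t)\mathbf a'$ for a smooth real lift $\mathbf a'$ depending on $(r,\theta)$, or we shrink the collar and note that only the restriction to $S$ is needed, since both blocks are glued along $S$ with collars. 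The maps $\jmath_0$ and $\jmath_\infty$ are the identity on $\mathcal G$. Applying \eqref{eq:infinity-D-boundary} to $v+\mathbf a(\theta)$, with $v_1=x$, $v_2=\mu-\tfrac{\theta(\theta+1)}2$, $v_3=y$ and $v_4=\nu+\theta$, gives on $S$
\[
D_\infty\circ\Phi\circ D_0^{-1}\bigl([x,y],(1,[\theta,\mu,\nu])\bigr)
=\bigl([x,-y],\,(1,[-\theta,\mu-\tfrac{\theta(\theta+1)}2,-\nu-\theta])\bigr),
\]
which is $\rho\times\psi$.

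\emph{Step 2: $\psi$ is a well-defined diffeomorphism.} We check compatibility with the identifications.

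1. Integer translations in $\mu$ and $\nu$ clearly descend.
2. For the seam \eqref{eq:tate-coordinate-seam}, the image of $(\theta+1,\mu,\nu)$ is $[-\theta-1,\alpha,\beta]$, where $\alpha=\mu-\tfrac{(\theta+1)(\theta+2)}2$ and $\beta=-\nu-\theta-1$. The seam on the $\infty$ side rewrites this as $[-\theta,\alpha-\beta,\beta]$.
3. A short computation gives $\alpha-\beta=\mu+\nu-\tfrac{\theta(\theta+1)}2$. This is exactly the image of $(\theta,\mu+\nu,\nu)$.

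The explicit inverse $[\theta',\mu',\nu']\mapsto[-\theta',\ \mu'+\tfrac{\theta'(\theta'-1)}2,\ -\nu'+\theta']$ is checked the same way, so $\psi$ is a diffeomorphism. On the collar, $r\mapsto r^{-1}$ together with the same formulas gives a collar-compatible extension. Hence $Y=N_0\cup_\psi N_\infty$ carries the standard smooth structure obtained by gluing along collars, and it is closed.

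\emph{Step 3: removing the reflection and assembling $F$.} Replace $D_\infty$ by $D'_\infty=(\rho\times\mathrm{id})\circ D_\infty$. This is still a diffeomorphism $\overline{\mathcal G}_\infty|_{K_\infty}\to\T^2\times N_\infty$, because $\rho$ is a diffeomorphism of $\T^2$. Now $D'_\infty\circ\Phi\circ D_0^{-1}=\mathrm{id}_{\T^2}\times\psi$ near the boundary. Define $F=D_0$ on $g^{-1}(K_0)$ and $F=D'_\infty\circ\Phi$ on $g^{-1}(K_\infty)$, where $\Phi$ is read through $X=\overline{\mathcal G}_0\cup_\Phi\overline{\mathcal G}_\infty$. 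These agree on $g^{-1}(S)$ by Step 1.

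\emph{Expected obstacle.} The main difficulty is smoothness of $F$ across $g^{-1}(S)$. The boundary formulas of \cref{prop:relative-tate,rem:infinity-product} hold on open annuli, so each $D_j$ is defined on a neighbourhood of its block. I would use these open-annulus formulas, with the radial coordinate $r$ on one side matched to $r^{-1}$ on the other, to show that the two definitions of $F$ agree on an open collar, not merely on $S$. Then $F$ is a diffeomorphism onto $\T^2\times Y$, with $Y$ built from the same collars.
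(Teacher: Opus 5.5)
Your Steps~1--2 and the reflection trick in Step~3 are the paper's argument. The paper also reads off $f_S=R_T\times\psi$ from \eqref{eq:whole-block-collar} and \eqref{eq:infinity-D-boundary} and checks the seam relation with the same inverse. It removes $R_T$ because $R_T\times\Id_{N_\infty}$ extends over the second block, which is your replacement of $D_\infty$ by $(\rho\times\Id)\circ D_\infty$.

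The gap is your plan for smoothness of $F$ across $g^{-1}(S)$. The maps $D_0$ and $(\rho\times\Id)\circ D_\infty\circ\Phi$ do not agree on any open collar, because off $S$ the gluing is not of product form. There $t=\theta+i\sigma$ with $\sigma=-\frac{\log r}{2\pi}\neq0$, and the real lift $\mathbf a'=P(t)_\R^{-1}u(t)$ is no longer $\mathbf a(\theta)$. Write $u(t)=P(t)\mathbf c(t)$ with $\mathbf c=(0,-\tfrac{t(t+1)}2,0,t)^{\mathsf T}$ and differentiate in $\sigma$. This gives $P(\theta)\,\partial_\sigma\mathbf a'|_{\sigma=0}=iP(\theta)\mathbf c'(\theta)$, where $P(\theta)\mathbf c'(\theta)=-(\theta+\tfrac12)p_2+p_4\neq0$. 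The vectors $p_2=(-q,1)^{\mathsf T}$ and $p_4-tp_2=i(q,1)^{\mathsf T}$ are $\C$-linearly independent, so $W=\operatorname{span}_\R(p_2,p_4)$ satisfies $W\cap iW=0$. Hence $\partial_\sigma\mathbf a'$ has a nonzero $(e_1,e_3)$-part on $S$.

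Consequently the $\T^2$-component of your piecewise $F$ is $[x,y]$ for $r\le1$ and $[x+a_1',y+a_3']$ for $r\ge1$. Here $(a_1',a_3')$ vanishes on $S$ but its radial derivative does not. So $F$ is a homeomorphism, smooth on each block, but not $C^1$ along $g^{-1}(S)$. This holds for any choice of collars on $Y$, since the projection $\T^2\times Y\to\T^2$ is smooth regardless.

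The repair is the alternative you mention only in passing in Step~1 ("only the restriction to $S$ is needed"). It needs the uniqueness-of-gluing theorem rather than a collar-matching computation, and this is what the paper uses. Since $D_0$ and $D_\infty$ are diffeomorphisms of the closed blocks that extend across $S$, $X$ is the collar gluing of $\T^2\times N_0$ and $\T^2\times N_\infty$ along $f_S$. By \cite[Chapter~8, Theorem~2.1]{Hirsch1976}, its diffeomorphism type depends only on $f_S=\rho\times\psi$ on $S$, not on the collars. Then \cite[Chapter~8, Theorem~2.2]{Hirsch1976}, or equivalently your modified $D'_\infty$, replaces $\rho\times\psi$ by $\Id_{\T^2}\times\psi$ and yields $\T^2\times Y$. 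The diffeomorphism $F$ produced this way necessarily differs from your piecewise map near the seam.
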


\begin{proof}
\noindent\emph{Step 1. The boundary map.}
We write \(\Phi_S\) for the restriction of \(\Phi\) over \(S\).
In the product coordinates, the boundary transition is
\[
\begin{tikzcd}[column sep=large,row sep=large]
 \overline{\mathcal G}_0|_S
   \arrow[r,"\Phi_S"]\arrow[d,"D_0"']
 &\overline{\mathcal G}_\infty|_S\arrow[d,"D_\infty"]\\
 \T^2\times\partial N_0\arrow[r,"f_S"']
 &\T^2\times\partial N_\infty,
\end{tikzcd}
\qquad
 f_S:=D_\infty\circ\Phi_S\circ D_0^{-1}.
\]
For \(q=e^{2\pi i\theta}\) and
\(v=(x,\mu,y,\nu)^{\mathsf T}\),
equations~\eqref{eq:whole-block-collar} and
\eqref{eq:infinity-D-boundary} imply
\[
 \begin{aligned}
 (D_0\circ\jmath_0)(q,[P(\theta)v])
   &=([x,y],[\theta,\mu,\nu]),\\
 (D_\infty\circ\jmath_\infty)(q,[P(\theta)v])
   &=([x,-y],[-\theta,\mu,-\nu]).
 \end{aligned}
\]
The radial coordinate is \(1\) on both boundaries and is omitted.
By \eqref{eq:two-end-gluing-map},
\[
 \Phi_S\bigl(\jmath_0(q,[P(\theta)v])\bigr)
 =\jmath_\infty(q,[P(\theta)(v+\mathbf a(\theta))]),
\]
where
\[
 \mathbf a(\theta)=
 \left(0,-\frac{\theta(\theta+1)}2,0,\theta\right)^{\mathsf T}
\]
is the lift from \cref{prop:primitive}.
Applying \(D_\infty\) gives the explicit formula
\begin{equation*}
 \begin{aligned}
 f_S([x,y],[\theta,\mu,\nu])
 &=\left([x,-y],
   \left[-\theta,\mu-\frac{\theta(\theta+1)}2,-\nu-\theta\right]\right)\\
 &=(R_T\times\psi)([x,y],[\theta,\mu,\nu]),
 \end{aligned}
\end{equation*}
where \(R_T[x,y]=[x,-y]\), as in~\eqref{eq:G0}, and
\(
\psi[\theta,\mu,\nu] = \left[-\theta,\mu-\frac{\theta(\theta+1)}2,-\nu-\theta\right]
\).

The formula for \(\psi\) is compatible with the boundary
identifications:
\[
 \begin{aligned}
 \psi[\theta+1,\mu,\nu]
 &=\left[-\theta-1,
     \mu-\frac{(\theta+1)(\theta+2)}2,-\nu-\theta-1\right]\\
 &=\left[-\theta,
     \mu+\nu-\frac{\theta(\theta+1)}2,-\nu-\theta\right]\\
 &=\psi[\theta,\mu+\nu,\nu].
 \end{aligned}
\]
The inverse is
\[
 \psi^{-1}[\eta,\alpha,\beta]
 =\left[-\eta,\alpha+\frac{\eta(\eta-1)}2,-\beta+\eta\right].
\]
Thus \(\psi:\partial N_0\to\partial N_\infty\) is a
diffeomorphism.

\smallskip
\noindent\emph{Step 2. The smooth structure on \(Y\).}
Using the standard collar-gluing construction
\cite[Chapter~8, Section~2]{Hirsch1976}, we obtain a closed smooth
four-manifold \(Y=N_0\cup_\psi N_\infty\).
Its diffeomorphism type is independent of the collar choices
by the uniqueness of gluing
\cite[Chapter~8, Theorem~2.1]{Hirsch1976}.
Since \(D_0,D_\infty\) extend across the cutting circle, the
same collar-gluing theorem applies to the decomposition of
\(X\) along \(g^{-1}(S)\). The boundary formula
\(f_S=R_T\times\psi\) gives
\[
 X\cong_{\mathrm{diff}}
 (\T^2\times N_0)
 \cup_{R_T\times\psi}
 (\T^2\times N_\infty).
\]
We have
\[
 (\Id_{\T^2}\times\psi)\circ(R_T\times\psi)^{-1}
 =R_T\times\Id_{\partial N_\infty}.
\]
This boundary diffeomorphism
\(R_T\times\Id_{\partial N_\infty}\) extends over the
second block as
\[
 R_T\times\Id_{N_\infty}:
 \T^2\times N_\infty
 \longrightarrow\T^2\times N_\infty.
\]
The condition in
\cite[Chapter~8, Theorem~2.2]{Hirsch1976}
is therefore satisfied, and we obtain
\[
 \begin{aligned}
 X
 &\cong_{\mathrm{diff}}
 (\T^2\times N_0)
 \cup_{R_T\times\psi}(\T^2\times N_\infty)\\
 &\cong_{\mathrm{diff}}
 (\T^2\times N_0)
 \cup_{\Id_{\T^2}\times\psi}(\T^2\times N_\infty)\\
 &\cong_{\mathrm{diff}}\T^2\times Y.
 \end{aligned}
\]
The diffeomorphism can be chosen relative to the first block.
Indeed, the uniqueness-of-gluing theorem fixes that block
pointwise \cite[Chapter~8, Theorem~2.1]{Hirsch1976},
and the reflection above acts only on the second block.
Thus, writing \(\iota_0:N_0\hookrightarrow Y\) for the natural
inclusion and identifying \(g^{-1}(K_0)\) with
\(\overline{\mathcal G}_0|_{K_0}\), we may choose \(F\) so that
\begin{equation}\label{eq:relative-product-first-block}
 F|_{g^{-1}(K_0)}
 =(\Id_{\T^2}\times\iota_0)\circ D_0.
\end{equation}

\end{proof}

\subsection{The four-dimensional factor is a homotopy sphere}
\label{subsec:homotopy-factor}

We next compute the effect of \(\psi\) on the three marked
boundary loops. The image of the base loop
\(\theta_0\) is the essential point: although \(\theta_0\)
is trivial in \(N_0\), its image contains the surviving
longitude of \(N_\infty\). This gives the additional relation
that kills the fundamental group of \(Y\).

\begin{proposition}\label[proposition]{prop:homotopy-sphere}
The manifold \(Y\) is a smooth homotopy four-sphere.
\end{proposition}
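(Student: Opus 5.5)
We compute \(\pi_1(Y)\) by van Kampen and \(\chi(Y)\) by additivity, then use the standard recognition argument that the introduction to \cref{sec:recognition} already takes for granted.

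For the Euler characteristic, \(Y=N_0\cup_\psi N_\infty\) with \(\partial N_j\) a closed \(3\)-manifold, so \(\chi(\partial N_j)=0\). \Cref{lem:node-topology} then gives
\[
\chi(Y)=\chi(N_0)+\chi(N_\infty)-\chi(\partial N_0)=1+1-0=2.
\]

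For the fundamental group, take the basepoint \([0,0,0]\) on \(\partial N_0\). Since \(\psi[0,0,0]=[0,0,0]\), the basepoint matches the one on \(\partial N_\infty\). The boundary \(\partial N_0\) is a mapping torus of \(T\), so \(\pi_1(\partial N_0)\) is generated by \(m_0,\ell_0,\theta_0\) (with relations we shall not need). By van Kampen, \(\pi_1(Y)\) is the quotient of \(\pi_1(N_0)*\pi_1(N_\infty)=\Z\langle\ell_0\rangle*\Z\langle\ell_\infty\rangle\) by the relations \(i_{0*}(c)=i_{\infty*}(\psi_*c)\) for \(c=m_0,\ell_0,\theta_0\).

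We then compute \(\psi_*\) on the marked loops from \eqref{eq:actual-four-dimensional-gluing}:
\begin{itemize}
\item \(\psi(m_0(a))=[0,a,0]=m_\infty(a)\), so \(\psi_*m_0=m_\infty\).
\item \(\psi(\ell_0(a))=[0,0,-a]\), so \(\psi_*\ell_0=\ell_\infty^{-1}\).
\item \(\psi(\theta_0(a))=[-a,-a(a+1)/2,-a]\), a based loop since \(a=1\) gives \([-1,-1,-1]\sim[0,-2,-1]=[0,0,0]\) by \eqref{eq:tate-coordinate-seam}. This path is homotopic rel endpoints to the concatenation of a \(\theta^{-1}\)-type segment with loops in the \((\mu,\nu)\)-fibre. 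Hence \(\psi_*\theta_0=\theta_\infty^{-1}\, m_\infty^{j}\,\ell_\infty^{-1}\) up to conjugation and ordering, for some integer \(j\).
\end{itemize}
Lemma~\ref{lem:node-topology} kills \(m\) and \(\theta\) in each block and sends \(\ell\mapsto\ell\). The three relations therefore become
\[
1=1,\qquad \ell_0=\ell_\infty^{-1},\qquad 1=\ell_\infty^{\pm1}.
\]
Hence \(\pi_1(Y)=1\). The image of \(\theta_0\) is the key point flagged before the statement: it contains the surviving longitude exactly once.

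The main obstacle is making the \(\theta_0\) computation rigorous. One must decompose \(\psi\circ\theta_0\) in \(\pi_1(\partial N_\infty)\), keeping track of the seam identification, the basepoint, and the nonabelian mapping-torus group. I would first straighten the path by a linear homotopy in the \((\mu,\nu)\) coordinates on the cover \(\R_\theta\times\R^2\). Since only its image in \(\pi_1(N_\infty)\cong\Z\) matters, it suffices to compute the abelianized image in \(H_1(N_\infty)\cong\Z\langle\ell\rangle\). That image is the total \(\nu\)-displacement, namely \(-1\), because \(\theta\) and \(m\) map to \(0\) there. This fixes the exponent of \(\ell_\infty\) as \(\pm1\) regardless of \(j\).

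To finish, \(Y\) is a closed, simply connected, smooth \(4\)-manifold with \(\chi(Y)=2\). Poincaré duality and the universal coefficient theorem give \(H_*(Y)\cong H_*(S^4)\). A degree-one map \(Y\to S^4\) is then a homology isomorphism between simply connected spaces, so Whitehead's theorem makes it a homotopy equivalence. Thus \(Y\) is a smooth homotopy \(4\)-sphere.
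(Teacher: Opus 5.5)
Your proposal is correct and follows the paper's proof: the same van Kampen computation on the marked loops \(m_0,\ell_0,\theta_0\) via \cref{lem:node-topology}, the same Euler characteristic count, and the same homology-plus-Whitehead recognition step. The differences are minor. You read off the image of \(\psi\circ\theta_0\) in \(\pi_1(N_\infty)\cong\Z\) as the degree of the circle-valued coordinate \(\nu\); this is valid because \(\nu\) is well defined on \(\partial N_\infty\), since the seam \eqref{eq:tate-coordinate-seam} fixes \(\nu\), and the \(\nu\)-degree agrees with \(i_*\) on the generators. The paper instead exhibits an explicit square homotopy \(\psi\circ\theta_0\simeq\theta_\infty^{-1}*\ell_\infty^{-1}\), and uses a generator of \(\pi_4(Y)\cong H_4(Y;\Z)\) where you use a degree-one collapse \(Y\to S^4\).
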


\begin{proof}
Recall from \cref{sec:topology} that the marked
loops in \(\partial N_j\), based at \([0,0,0]\), are
\[
 \begin{aligned}
 m_j(s)&=[0,s,0],\\
 \ell_j(s)&=[0,0,s],\\
 \theta_j(s)&=[s,0,0],
 \end{aligned}
 \qquad 0\le s\le1.
\]
By \cref{lem:node-topology},
\[
 \pi_1(N_j)=\Z\langle\ell_j\rangle,
 \qquad
 m_j=\theta_j=1
 \quad\text{in }\pi_1(N_j).
\]
Formula~\eqref{eq:actual-four-dimensional-gluing} gives
\[
 \psi\circ m_0=m_\infty,
 \quad
 \psi\circ\ell_0=\ell_\infty^{-1}.
\]
To compute the image of \(\theta_0\), consider
\[
 [0,1]^2\longrightarrow\partial N_\infty,
 \quad
 (s,t)\longmapsto
 \left[-s,-\frac{(s+1)t}{2},-t\right].
\]
Its diagonal is
\[
 s\longmapsto
 \left[-s,-\frac{s(s+1)}2,-s\right]
 =\psi\bigl(\theta_0(s)\bigr).
\]
The bottom and right edges are, respectively,
\[
 s\longmapsto[-s,0,0]=\theta_\infty^{-1}(s),
 \qquad
 t\longmapsto[-1,-t,-t]
 =[0,0,-t]=\ell_\infty^{-1}(t),
\]
where the last identification follows from the
mapping torus relation~\eqref{eq:tate-coordinate-seam}.
The diagonal is homotopic to the bottom edge followed by the right edge, relative to its endpoints. Hence
\[
 \psi\circ\theta_0
 \simeq\theta_\infty^{-1}*\ell_\infty^{-1}
 \quad\text{in }\partial N_\infty.
\]
Since \(m_\infty\) and \(\theta_\infty\) are
null-homotopic in \(N_\infty\), we obtain
\begin{equation*}
 \psi\circ m_0\simeq1,\qquad
 \psi\circ\ell_0\simeq\ell_\infty^{-1},\qquad
 \psi\circ\theta_0\simeq\ell_\infty^{-1}
 \quad\text{in }N_\infty.
\end{equation*}
The Seifert--van Kampen theorem therefore gives
\begin{equation*}
 \pi_1(Y)
 =
 \langle\ell_0,\ell_\infty\mid
 \ell_0=\ell_\infty^{-1},\
 1=\ell_\infty^{-1}\rangle
 =0.
\end{equation*}

By \cref{lem:node-topology}, each block has Euler
characteristic one. Since \(\partial N_0\) is a closed
three-manifold,
\begin{equation*}
 \chi(Y)
 =\chi(N_0)+\chi(N_\infty)-\chi(\partial N_0)
 =1+1-0=2.
\end{equation*}
Since \(Y\) is simply connected, it is orientable. Choose an
orientation. Poincar\'e duality and the universal coefficient
theorem give
\[
 H_2(Y;\Z)
 \cong H^2(Y;\Z)
 \cong\operatorname{Hom}(H_2(Y;\Z),\Z).
\]
Thus \(H_2(Y;\Z)\) is torsion-free. Moreover,
\[
 \chi(Y)=2+\operatorname{rank}H_2(Y;\Z)=2,
\]
so \(H_2(Y;\Z)=0\). Duality also gives \(H_3(Y;\Z)=0\).
Consequently,
\[
 H_*(Y;\Z)\cong H_*(S^4;\Z).
\]
Successive applications of the Hurewicz theorem yield
\[
 \pi_2(Y)=\pi_3(Y)=0,
 \quad
 \pi_4(Y)\cong H_4(Y;\Z)\cong\Z.
\]
A map \(S^4\to Y\) representing an oriented generator
of \(\pi_4(Y)\) is therefore an integral homology
equivalence. Since both spaces have the homotopy type
of simply connected CW complexes, this map is a
homotopy equivalence
\cite[Corollary~4.33]{Hatcher2002}.

\end{proof}

\begin{remark}\label[remark]{rem:fibre-generator}
Choose \(q_*\in\operatorname{Int}K_0\setminus\{0\}\) and
\(p\in\varpi_0^{-1}(q_*)\). Since \(D_0\) preserves the base,
the embedded real torus
\[
 T_*=D_0^{-1}(\T^2\times\{p\})
\]
lies in \(g^{-1}(q_*)\). By
\eqref{eq:relative-product-first-block},
\(F(T_*)=\T^2\times\{\iota_0(p)\}\).
Since \(Y\simeq S^4\), the K\"unneth formula shows that
\([T_*]\) generates \(H_2(X;\Z)\cong\Z\), after choosing an
orientation of \(T_*\). No holomorphicity of \(T_*\) is asserted.
\end{remark}







\begin{proof}[Proof of \cref{thm:main}]
\Cref{prop:descent} gives a compact complex threefold \(X\) and a
surjective holomorphic map \(g:X\to\Pone\). By
\cref{prop:product-splitting,prop:homotopy-sphere} and Hirsch's
theorem \cite[Theorem~6(a)]{Hirsch1965}, the underlying smooth
manifold is diffeomorphic to \(\T^2\times S^4\). Pulling back
the complex structure by this diffeomorphism gives the required
complex structure.
Over \(\C^*\), the map \(g\) is the family of complex two-tori
constructed in \cref{prop:period-bridge}. By
\cref{prop:rank-one-fill,rem:infinity-filling,prop:descent}, its
critical locus consists of two disjoint complex tori of complex
dimension \(1\), lying over \(0\) and \(\infty\), respectively. This proves both
assertions of \cref{thm:main}.

\end{proof}

\section{The non-K\"ahler and non-product properties}
\label{sec:consequences}

With \cref{thm:main} proved, we establish the non-K\"ahler and
non-product assertions in \cref{prop:nonproduct}. These depend
only on the topology of \(\T^2\times S^4\) and hold for every
complex structure on that smooth manifold.

We use the cohomology ring to exclude a symplectic form, and
the characteristic numbers of a compact complex surface to
exclude a biholomorphic product decomposition.

\begin{proof}[Proof of \cref{prop:nonproduct}]
\noindent\emph{Non-K\"ahlerness.}
The K\"unneth formula identifies
\(H^2(Z;\R)\) with the pullback of \(H^2(\T^2;\R)\)
under a smooth product identification. Consequently every
\(\alpha\in H^2(Z;\R)\) satisfies \(\alpha^2=0\), and hence
\(\alpha^3=0\). If the underlying smooth manifold admitted a
symplectic form \(\omega\), its symplectic orientation would give
\(\int_Z\omega^3>0\), contradicting \([\omega]^3=0\).
Thus the underlying manifold admits no symplectic form, and in
particular \(Z\) admits no K\"ahler metric.

\smallskip
\noindent\emph{Non-productness.}
Suppose that \(Z\) admits a nontrivial biholomorphic product
decomposition. After grouping factors if necessary, we may write
\(Z\cong C\times S\), where \(C\) is a compact complex curve
of genus \(h\) and \(S\) is a compact complex surface.
The K\"unneth formula with real coefficients
\cite[Section~3.2]{Hatcher2002} gives
\[
\begin{aligned}
	2&=b_1(C\times S)=2h+b_1(S),\\
	1&=b_2(C\times S)=1+b_2(S)+2h\,b_1(S).
\end{aligned}
\]
Since the Betti numbers are nonnegative, these equalities imply
\(b_2(S)=0\) and \(b_1(S)\in\{0,2\}\).
Poincar\'e duality therefore gives
\[
\chi_{\mathrm{top}}(S)
=2-2b_1(S)+b_2(S)
\in\{-2,2\}.
\]

On the other hand, since \(b_2(S)=0\), we have \(H^2(S;\mathbb R)=0\). So the intersection form
has signature \(\sigma(S)=0\); see
\cite[Section~19]{MilnorStasheff1974}.
The first Chern class also vanishes in real cohomology, and hence
\(\langle c_1(TS)^2,[S]\rangle=0\).
The characteristic-class identities \(c_2=e\) and
\(p_1=c_1^2-2c_2\), together with the signature theorem
\cite[Section~14, Corollary~15.5, and Theorem~19.4]{MilnorStasheff1974}
(see also \cite{Hirzebruch1995}), now yield
\[
0=\langle c_1(TS)^2,[S]\rangle
=2\chi_{\mathrm{top}}(S)+3\sigma(S)
=2\chi_{\mathrm{top}}(S).
\]
Thus \(\chi_{\mathrm{top}}(S)=0\), contradicting
\(\chi_{\mathrm{top}}(S)\in\{-2,2\}\).
Therefore \(Z\) admits no nontrivial biholomorphic product
decomposition.

\end{proof}







\noindent
\textbf{AI Disclosure}.
The authors used Codex (OpenAI) to assist with literature searches
and the development and checking of mathematical constructions
and arguments. The authors take full responsibility for the statements,
proofs, and references in this paper.

\end{document}